\newtheorem{theorem}{Theorem}[section]
\newtheorem{proposition}[theorem]{Proposition}
\newtheorem{lemma}[theorem]{Lemma}
\newtheorem{definition}[theorem]{Definition}
\theoremstyle{definition}
\numberwithin{equation}{section}
\numberwithin{figure}{section}
\newcommand{\su}{\subseteq}
\newcommand{\sm}{\setminus}
\newcommand{\Z}{\mathbb{Z}}
\newcommand{\HH}{\mathcal{H}}
\newcommand{\conv}{\operatorname{conv}}
\newcommand{\PP}{\mathcal{P}}
\begin{document}

\begin{frontmatter}[classification=text]
%% EDITOR: this will force the keywords to appear right after the Abstract.
%%   If the abstract is too long and would force the keywords off the
%%   front page, please comment out % [classification=text] above
%%   This way the keywords will be floated on the bottom of the first page
%%   even though the Abstract spills over to the next page.

%%% AUTHOR: Title goes here.  This line is optional.  You must use it
%%   if title has footnote attached or requires nontrivial typesetting,
%%   e.g., inclusion of linebreaks to force nice layout.
\title{A Sharp Ramsey Theorem for Ordered Hypergraph Matchings} %% please capitalize all significant words

%%% AUTHOR:
%%% List all authors. If you wish, place grant acknowledgements in \thanks.
%%% In brackets include a short tag for each author.
\author[pgom]{Lisa Sauermann\thanks{Institute for Applied Mathematics, University of Bonn, Bonn, Germany. Email: \href{sauermann@iam.uni-bonn.de}{\nolinkurl{sauermann@iam.uni-bonn.de}}. Research supported by NSF Award DMS-2100157 and a Sloan Research Fellowship.}}
\author[johan]{Dmitrii Zakharov\thanks{Department of Mathematics, Massachusetts Institute of Technology, Cambridge, MA.  Research supported by a Jane Street Graduate Research Fellowship. 
Email: \href{zakhdm@mit.edu}{\nolinkurl{zakhdm@mit.edu}}.}}

%%% AUTHOR: Abstract goes here
\begin{abstract}
We prove essentially sharp bounds for Ramsey numbers of ordered hypergraph matchings, introduced recently by Dudek, Grytczuk, and Ruci\'{n}ski. Namely, for any $r \ge 2$ and $n \ge 2$, we show that any collection $\HH$ of $n$ pairwise disjoint subsets in $\Z$ of size $r$ contains a subcollection of size $\lfloor n^{1/(2^r-1)}/2\rfloor$ in which every pair of sets are in the same relative position with respect to the linear ordering on $\Z$. This improves previous bounds of Dudek--Grytczuk--Ruci\'nski and of Anastos--Jin--Kwan--Sudakov and is sharp up to a factor of $2$. For large $r$, we even obtain such a subcollection of size $\lfloor (1-o(1))\cdot n^{1/(2^r-1)}\rfloor$, which is asymptotically tight (here, the $o(1)$-term tends to zero as $r \to \infty$, regardless of the value of $n$).
    
Furthermore, we prove a multiparameter extension of this result where one wants to find a clique of prescribed size $m_P$ for each relative position pattern $P$. Our bound is sharp for all choices of parameters $m_P$, up to a constant factor depending on $r$ only. This answers questions of Anastos--Jin--Kwan--Sudakov and of Dudek--Grytczuk--Ruci\'nski.
\end{abstract}
\end{frontmatter}

%%% AUTHOR: body of paper starts here

\section{Introduction}

Ramsey Theory is one of the most active areas of combinatorics, with a rich history spanning an entire century and many exciting recent results (e.g.~\cite{Campos, Lee, MV, RR}, to mention just a few). The basic philosophy behind the results in this area is that any sufficiently large structure, even if completely disordered, must contain a homogeneous-looking substructure of a prescribed size. Such Ramsey results have been established in a variety of contexts, and improving the quantitative bounds in these results is of great interest (see the books \cite{graham-rothschild-spencer} and \cite{promel} for a general overview of the area of Ramsey theory).

Here, we are interested in a Ramsey problem about ordered hypergraph matching. An $r$-uniform hypergraph matching $\HH$ is a finite collection of pairwise disjoint sets, each of size $r$. These sets are called hyperedges, and their union is called the vertex set of the hypergraph. An \emph{ordered} $r$-uniform hypergraph matching $\HH$ is such a $r$-uniform hypergraph matching equipped with a total ordering on the vertex set.

Now, given an ordered hypergraph matching $\HH$, we want to find a large submatching in $\HH$ which looks homogeneous with respect to the total ordering on the vertex set.
More precisely, if $\HH$ is an ordered $r$-uniform hypergraph matching, then for any two edges $e,f\in\HH$ we can consider the relative order of the vertices in $e$ and $f$. In other words, we can consider the ordered $r$-uniform hypergraph matching $\{ e,f\}$. This can be represented by a string of length $2r$ consisting of $r$ times the letter $\rm A$ and $r$ times the letter $\rm B$. For example if $e=\{ 1,2,5\}$ and $f=\{ 3,4,6\}$ and the vertices in $e\cup f=\{1,2,\dots,6\}$ are ordered in the natural way, we obtain the pattern $\rm AABBAB$ encoding the relative order of the vertices $e$ and $f$. Note that this pattern is equivalent to the pattern $\rm BBAABA$ upon interchanging the roles of $e$ and $f$. To make the representation unique, let us impose the condition that the string needs to start with the letter $\rm A$. We can now define an \emph{$r$-pattern} to be a string in $\{{\rm A},{\rm B}\}^{2r}$ starting with $\rm A$ and consisting of $r$ times the letter $\rm A$ and $r$ times the letter $\rm B$.

For any ordered $r$-uniform hypergraph matching $\HH$, for any two edges $e,f\in\HH$ we can consider the corresponding $r$-pattern described by $\{ e,f\}$. If there is some $r$-pattern $P$ such that for every pair of edges $e,f\in\HH$ the $r$-pattern described by $\{ e,f\}$ is $P$, then we call the $r$-uniform hypergraph matching a $P$-clique. We say that $\HH$ contains a $P$-clique of size $m$, if there is some subset $\HH'\su \HH$ of $|\HH'|=m$ edges forming a $P$-clique.

Now we have a natural Ramsey-type question: given a hypergraph matching $\HH$ what is the size of the largest $P$-clique in $\HH$ over all choices of the pattern $P$? In the graph case $r=2$, this question was introduced and solved by
Dudek, Grytczuk, and Ruci\'nski~\cite{DGR22}. They showed that any matching of size $m^3+1$ contains a $P$-clique of size $m+1$, where $P$ is one of the three possible 2-patterns: AABB, ABAB, or ABBA. Moreover, they constructed a matching of size $m^3$ without $P$-cliques of size $m+1$ for any pattern $P$. 
In a later work, the same authors~\cite{DGR} systematically studied the higher uniformity version of the problem and showed that any ordered $r$-uniform hypergraph with $C_r m^{3^{r-1}}$ edges contains a $P$-clique of size $m+1$ for some $r$-pattern $P$. On the other hand, they could only construct a matching of size $m^{2^{r-1}+2}$ without any $P$-cliques of size $m+1$. These bounds were substantially improved by Anastos, Jin, Kwan, and Sudakov~\cite{AJKS}: they reduced the upper bound to $C_r m^{(r+1) 2^{r-2}}$ and constructed an example with $m^{2^{r}-1}$ edges, thus, showing that the correct exponent of $m$ is roughly $2^r$ rather than $3^r$. They also got a sharp exponent of $m$ in the cases $r=3$ and $r=4$. However, their techniques required a substantial case analysis already for $r=4$ and do not seem to generalize to large $r$.

In this paper, we determine the correct behavior of the Ramsey problem for ordered $r$-uniform hypergraph matchings for all values of $r$.

\begin{theorem} \label{thm-main}
Let $r$ and $m$ be positive integers. Let $\HH$ be an ordered $r$-uniform hypergraph matching. For every $r$-pattern $P$, assume that every $P$-clique contained in $\HH$ has size at most $m$. Then the number of edges in $\HH$ is bounded by
\[|\HH|\le C_r\cdot m^{2^r-1},\]
where $C_r=2^{r-1}(r-1)!$.
\end{theorem}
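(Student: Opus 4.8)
The plan is to argue by induction on $r$, and in fact to prove the multiparameter strengthening advertised in the abstract: a bound of the shape $|\HH|\le C_r\cdot F\big(\{m_P\}_P\big)$ for a suitable combinatorial function $F$ of the thresholds $m_P$, one per $r$-pattern, which specializes to the stated bound when all $m_P$ equal $m$. The base case $r=1$ is immediate, since there is a unique $1$-pattern and every $1$-uniform matching is a clique for it, so $|\HH|\le m=C_1 m^{2^1-1}$.

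For the inductive step, the first move is to peel off the ``fully separated'' pattern $\mathrm{A}^r\mathrm{B}^r$. Put a partial order on $\HH$ by $e\prec f$ iff $\max e<\min f$; an $\mathrm{A}^r\mathrm{B}^r$-clique is exactly a chain. By Dilworth's theorem, either there is a chain of the required size, and we are done, or $\HH$ is covered by few antichains. Each antichain is a family of edges whose spans $[\min e,\max e]$ pairwise intersect, so by Helly's theorem on the line they share a common point $p$; it therefore suffices to bound the size of each such \emph{clump}. Inside a clump, each edge $e$ is cut by $p$ into a nonempty left part $e^-=e\cap(-\infty,p)$ and a nonempty right part $e^+=e\cap(p,\infty)$, and after passing to a subclump (at the cost of a factor $r-1$) all left parts share a common size $s$. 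Since nothing crosses $p$, the $r$-pattern of a pair $\{e,f\}$ is determined by the $s$-pattern of $\{e^-,f^-\}$, the $(r-s)$-pattern of $\{e^+,f^+\}$, and one extra bit recording which of $\min e^+,\min f^+$ is the smaller (relative to the order in which $\min e^-,\min f^-$ occur). Hence a large $r$-clique in a clump is the same as a large subcollection on which both side-patterns are constant and this bit is constant. One controls the left parts via the induction hypothesis, then the right parts via the induction hypothesis, and the extra bit by one Erd\H{o}s--Szekeres application; iterating the decomposition, a clump is eventually pushed into a ``sliced'' configuration in which all edges are cut by $r-1$ common points into $r$ fixed coordinate slots, where an $r$-clique of size $\Omega\big(N^{1/2^{r-1}}\big)$ is read off by iterated Erd\H{o}s--Szekeres (here $N$ is the size of the sliced configuration).

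The step that needs genuine care is the arithmetic of this recursion, and this is where the multiparameter formulation earns its keep. The final iterated Erd\H{o}s--Szekeres over $r$ slots contributes an exponent $2^{r-1}$; the $r-1$ preceding clumping steps must together contribute the remaining $2^r-1-2^{r-1}=2^{r-1}-1$. The difficulty is that the cost of a clumping step is inflated by all the later steps required to turn a long chain into a genuine clique, so the most naive bookkeeping overshoots --- in particular, naively combining the two branches of a \emph{balanced} split $2\le s\le r-2$ (which first happens at $r=4$) produces an exponent of order $(2^s-1)(2^{r-s}-1)$, strictly larger than $2^r-2$, because the two branch-bounds end up multiplying rather than combining efficiently. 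Resolving this is the technical heart of the proof: one runs the recursion on a per-pattern basis, so that each clumping step peels off cliques only for a structured family of patterns whose thresholds $m_P$ are then ``used up'' and do not reappear in the leftover subproblem, and one must choose $F$ so that the contributions of the branches at a balanced split genuinely combine to the sharp exponent rather than to the lossy product bound. Everything else --- Dilworth, Helly, Erd\H{o}s--Szekeres, and pigeonhole --- is routine; identifying the correct $F$ and proving that the pattern families arising along different branches interact in exactly the way needed to make the exponents add up to $2^r-1$ is the main obstacle, and also exactly what yields the claimed multiparameter sharpness.

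Finally, the constant $C_r=2^{r-1}(r-1)!$ should come out of the bookkeeping almost for free: the factor $(r-1)!$ accounts for the iterated ``pigeonhole on the split size'' (a factor at most $r-j$ at the step where a $(j+1)$-uniform piece is cut in two), and the factor $2^{r-1}$ for the ``extra bit'' passing to its reverse at each of the $r-1$ Erd\H{o}s--Szekeres gluing steps; one only needs to check these losses compound multiplicatively and no worse, which is a direct induction once the exponent recursion is in place.
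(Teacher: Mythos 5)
Your sketch stalls exactly at the step you yourself flag as ``the technical heart.'' You never specify the multiparameter function $F$, never state the per-pattern recursion, and never prove that the two branches of a split at a common point $p$ combine to the exponent $2^r-1$ rather than to a lossy product; writing that ``one must choose $F$ so that the contributions of the branches genuinely combine to the sharp exponent'' restates the problem instead of solving it. The structural part of your plan makes the difficulty unavoidable: after cutting a clump at $p$ you apply the induction hypothesis \emph{separately} to the left parts $e^-$ (uniformity $s$) and the right parts $e^+$ (uniformity $r-s$), treating them as independent subproblems, and it is precisely this independence that makes the resulting exponents multiply instead of add --- this is the same trap that kept the earlier Dudek--Grytczuk--Ruci\'nski and Anastos--Jin--Kwan--Sudakov arguments away from $2^r-1$. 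Nothing in your proposal supplies the mechanism by which the thresholds $m_P$ get ``used up'' along one branch without reappearing in the other, so the claimed bound (and with it the constant $C_r=2^{r-1}(r-1)!$, whose derivation presupposes that the exponent recursion closes) is not established. (As a side remark, your numerical claim that a balanced split forces an exponent $(2^s-1)(2^{r-s}-1)>2^r-2$ is itself off --- for $r=4$, $s=2$ that product is $9<14$ --- which suggests the intended recursion was never pinned down.)

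For comparison, the paper never splits an edge into two shorter edges at all. It keeps the full $r$-sets and tracks only an ordered partition $\lambda$ of $r$ (the $\lambda$-partite structure of Definition~\ref{def-s-partite}), and proves a dichotomy (Proposition~\ref{prop-two-options}): either a $1/(2M)$-fraction of $\HH$ is \emph{interval-wise} $\lambda$-partite --- extracted not by Dilworth/Helly but by orienting an auxiliary intersection graph, bounding out-degrees by $M-1$, and applying Caro--Wei --- or some one-step refinement $\lambda'$ of $\lambda$ admits a subfamily larger than a prescribed $M_{\lambda'}$. The interval-wise case is then handled in a \emph{single} application of a multidimensional Erd\H{o}s--Szekeres theorem, giving $|\HH^*|\le\prod_{P\in\PP(\lambda)}m_P$, i.e.\ exponent $2^{s-1}$ at the level of an $s$-part partition; along a chain of refinements $\lambda^{(1)}\succ\cdots\succ\lambda^{(r)}$ these exponents add up to $1+2+\cdots+2^{r-1}=2^r-1$, and since the classes $\PP(\lambda^{(s)})$ are pairwise disjoint, the thresholds multiply without reuse. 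That additivity-across-disjoint-pattern-classes is exactly the content your unspecified $F$ would have to encode, and it is the piece genuinely missing from your argument.
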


By the aforementioned construction of Anastos, Jin, Kwan, and Sudakov~\cite[Theorem 1.7]{AJKS}, the upper bound in Theorem~\ref{thm-main} is tight up to the constant factor $C_r$. Theorem~\ref{thm-main} can also be restated in the notation of \cite{AJKS} as follows. For an ordered matching $\HH$, let $L(\HH)$ be the size of the largest $P$-clique in $\HH$ for any pattern $P$. Define a Ramsey function $L_r(n)$ to be the minimum of $L(\HH)$ over all $r$-uniform ordered hypergraph matchings $\HH$ with $n$ edges. Then our Theorem~\ref{thm-main} can be restated as $L_r(n) \ge C_r^{-1/(2^r-1)}\cdot n^{1/(2^r-1)}$. Comparing this with the construction in \cite{AJKS} giving $L_r(n) \le \lceil n^{1/(2^r-1)} \rceil$, we obtain
$$
\lceil n^{1/(2^r-1)} \rceil \ge L_r(n) \ge C_r^{-1/(2^r-1)}\cdot n^{1/(2^r-1)}  > \frac{1}{2}n^{1/(2^r-1)}.
$$
So this determines $L_r(n)$ up to a constant of at most $2$ for all $r$ and $n$ (and the constant factor $C_r^{-1/(2^r-1)}=(2^{r-1}(r-1)!)^{-1/(2^r-1)}$ actually approaches 1 as $r$ tends to infinity).

In Theorem~\ref{thm-main}, for every $r$-pattern $P$, the size of every $P$-clique contained in $\HH$ is bounded by $m$. Dudek--Grytczuk--Ruci\'{n}ski and Anastos--Jin--Kwan--Sudakov also asked for the maximum size of $|\HH|$ when bounding the sizes of the $P$-cliques in $\HH$ by different numbers for different $r$-patterns $P$. In other words, how large can $|\HH|$ be, if for each $r$-pattern $P$ the size of each $P$-clique in $\HH$ is at at most a given number $m_P$? We also answer this more general question up to a constant factor depending on $r$.

To state the answer to this multi-parameter version of the problem, we need some more notation. First, it turns out that not for every $r$-pattern $P$ it is actually possible to form a large $P$-clique at all. One can show that this is only possible if $P$ has a particular block structure, where the string $P\in\{{\rm A},{\rm B}\}^{2r}$ can be divided into (consecutive) blocks of even size such that within each block the string reads either $\rm{AA}\dots\rm{A}\rm {BB}\dots\rm{B}$ with equally many $\rm{A}$'s and $\rm{B}$'s or $\rm{BB}\dots\rm{B}\rm {AA}\dots\rm{A}$ with equally many $\rm{A}$'s and $\rm{B}$'s. Following \cite{AJKS}, we call such a decomposition of an $r$-pattern $P$ a \emph{block partition}. For example, the $5$-pattern $\rm{AABBBABBAA}$ has the block partition $\rm{AABB|BA|BBAA}$, whereas the $5$-pattern $\rm{AABABBABBA}$ does not have a block partition.

It is not hard to see that every $r$-pattern $P$ has at most one block decomposition. Following \cite{DGR}, we call an $r$-pattern with a block decomposition \emph{collectable} (then it turns out that the collectable $r$-patterns are precisely the $r$-patterns $P$ for which there exist arbitrarily large $P$-cliques, this was shown in \cite{DGR}).

Given a positive integer $r$, we call a sequence $\lambda=(\lambda_1,\dots,\lambda_s)$ of positive integers with $\lambda_1+\dots+\lambda_s=r$ an \emph{ordered partition of $r$}. One may think of $\lambda$ as a partition of the set $\{1,\dots,r\}$ into non-empty intervals. We call $s$ the \emph{number of parts} of the ordered partition $\lambda=(\lambda_1,\dots,\lambda_s)$. For two ordered partitions $\lambda=(\lambda_1,\dots,\lambda_s)$ and $\lambda'=(\lambda'_1,\dots,\lambda'_{s'})$ of $r$, we write $\lambda \succ \lambda'$ if the identity $\lambda'_1+\dots+\lambda'_{s'}=r$ can be obtained from $\lambda_1+\dots+\lambda_s=r$ by splitting some summands into smaller sub-summands (without changing the order or re-combining any summands). In the language of partitions of $\{1,\dots,r\}$ into intervals, we have $\lambda \succ \lambda'$ if the partition corresponding to $\lambda'$ is a refinement of the partition corresponding to $\lambda$. For example, we have $(2,3) \succ (2,1,2)$ but $(2,3) \not\succ (1,2,2)$.

For any ordered partition $\lambda=(\lambda_1,\dots,\lambda_s)$ of $r$, let $\PP(\lambda)$ be the set of collectable $r$-patterns $P$ such that the block partition of $P$ consists of $s$ blocks of sizes $2\lambda_1,2\lambda_2,\dots,2\lambda_s$ (in this order). For instance, we have 
\[
\PP((1,2,1)) = \{\rm{ABAABBAB, ABAABBBA, ABBBAAAB, ABBBAABA}\}.
\]
Note that the sets $\PP(\lambda)$ for all ordered partitions $\lambda$ or $r$ form a partition of the set of all collectable $r$-patterns.
With this notation, our multi-parameter generalization of Theorem~\ref{thm-main} can be stated as follows. 

\begin{theorem} \label{thm-paramteters-upper-bound}
Let $r$ be a positive integer, and consider a positive integer $m_P$ for every collectable $r$-pattern $P$. Let $\HH$ be an ordered $r$-uniform hypergraph matching, and assume that for every collectable $r$-pattern $P$, every $P$-clique contained in $\HH$ has size at most $m_P$. Then the number of edges in $\HH$ is bounded by
\begin{equation}\label{eq_main_upper}
|\HH|\le 2^{r-1} \sum_{\lambda^{(1)} \succ \lambda^{(2)} \succ \dots \succ \lambda^{(r)}}\ \prod_{P\in \PP(\lambda^{(1)})\cup\dots\cup \PP(\lambda^{(r)})} m_{P},
\end{equation}
where the sum is over all sequences $\lambda^{(1)} \succ \lambda^{(2)} \succ \dots \succ \lambda^{(r)}$ of ordered partitions of $r$ (note that then for $s=1,\dots,r$, the ordered partition $\lambda^{(s)}$ must automatically have $s$ parts).
\end{theorem}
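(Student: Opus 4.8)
The plan is to prove Theorem~\ref{thm-paramteters-upper-bound} by induction on $r$, using three classical tools: Mirsky's theorem (the dual of Dilworth's theorem), Helly's theorem on the line, and the Erd\H{o}s--Szekeres theorem. Write $M(\lambda):=\prod_{P\in\PP(\lambda)}m_P$; since the sets $\PP(\lambda)$ partition the collectable $r$-patterns, the right-hand side of~\eqref{eq_main_upper} equals $2^{r-1}\sum_{\lambda^{(1)}\succ\dots\succ\lambda^{(r)}}\prod_{i=1}^{r}M(\lambda^{(i)})$. The base case $r=1$ is immediate, as then $\HH$ is an $\mathrm{AB}$-clique and $M((1))=m_{\mathrm{AB}}$. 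For the inductive step, I would first apply Mirsky's theorem to the partial order on $\HH$ in which $e<f$ means ``$e$ lies entirely to the left of $f$'': a chain in this order is precisely an $\mathrm{A}^r\mathrm{B}^r$-clique, hence has at most $m_{\mathrm{A}^r\mathrm{B}^r}=M((r))$ elements, so $\HH$ can be partitioned into at most $M((r))$ antichains. Since $\lambda^{(1)}=(r)$ in every chain, it then suffices to bound the size of a single antichain $\mathcal{A}$ by $2^{r-1}\sum_{\lambda^{(1)}\succ\dots\succ\lambda^{(r)}}\prod_{i=2}^{r}M(\lambda^{(i)})$.

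In an antichain no two edges are separated, so the intervals $[\min e,\max e]$ pairwise intersect; by Helly's theorem on the line they have a common point, and (since $r\ge 2$ and the edges are disjoint, the case $|\mathcal{A}|\le 1$ being trivial) this common set of points is a nondegenerate interval, so we may pick a non-integer common point $x$. Each edge $e\in\mathcal{A}$ then splits into a nonempty left part $e^-=\{v\in e:v<x\}$ and a nonempty right part $e^+=\{v\in e:v>x\}$. Partition $\mathcal{A}$ according to $j:=|e^-|\in\{1,\dots,r-1\}$; since the $2$-part ordered partitions of $r$ are exactly the $(j,r-j)$, it suffices to bound each class $\mathcal{A}_j$ by $2^{r-1}\sum_{(j,r-j)=\lambda^{(2)}\succ\lambda^{(3)}\succ\dots\succ\lambda^{(r)}}\prod_{i=2}^{r}M(\lambda^{(i)})$. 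On $\mathcal{A}_j$ we have a $j$-uniform matching $\mathcal{L}=\{e^-\}$ and an $(r-j)$-uniform matching $\mathcal{R}=\{e^+\}$, with all of $\mathcal{L}$ to the left of all of $\mathcal{R}$, and the following dictionary holds: a subset $\mathcal{B}\su\mathcal{A}_j$ is a collectable $P$-clique in $\HH$ if and only if $\{e^-:e\in\mathcal{B}\}$ is a collectable $P_L$-clique, $\{e^+:e\in\mathcal{B}\}$ is a collectable $P_R$-clique, and the orderings of $\mathcal{B}$ by $\min e^-$ and by $\min e^+$ either agree or are mutual reverses. In that case $P$ is determined by $P_L$, $P_R$ and this orientation, the ordered partition of $P$ is the concatenation of those of $P_L$ and $P_R$, and $(P_L,P_R,\text{orientation})\mapsto P$ gives a bijection from $\PP(\mu)\times\PP(\nu)\times\{+,-\}$ onto $\PP((\mu,\nu))$ for ordered partitions $\mu$ of $j$ and $\nu$ of $r-j$.

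It remains to bound $|\mathcal{A}_j|$, which is the core of the argument. I would prove this by a second induction in which the block structure of the edges is refined one block at a time. At each step one splits a single part $\lambda_\ell$ of the current ordered partition into two consecutive parts: concretely, one first applies Mirsky's theorem inside the $\ell$-th block to pass to a subfamily in which that block collapses to a single ``super-vertex'' --- equivalently, one applies the inductive hypothesis to a matching of strictly smaller uniformity whose patterns encode the possible clique types with that block made rigid --- and then applies Helly's theorem to introduce the new split point on the pieces where the block is ``spread out''. The orientation produced by a split is resolved by the Erd\H{o}s--Szekeres theorem: the two monotone behaviours are exactly the $\pm$ in the bijection above, they contribute the matching product of $m_P$'s, and this costs exactly one factor of $2$. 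Summing over which part of the current partition is split next reproduces the sum over refinements. The base case, where all parts equal $1$, is an iterated application of Erd\H{o}s--Szekeres over the $r-1$ ``columns'', in which the $2^{r-1}$ monotone types are precisely the cliques for the patterns in $\PP((1^r))$, contributing $\prod_{P\in\PP((1^r))}m_P=M((1^r))$. Carried out correctly this gives $|\mathcal{A}_j|\le 2^{r-1}\sum_{(j,r-j)=\lambda^{(2)}\succ\dots\succ\lambda^{(r)}}\prod_{i=2}^r M(\lambda^{(i)})$; summing over $j$ and multiplying by $M((r))$ from the first Mirsky step yields the theorem.

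The main obstacle is to organise this inner induction so that it yields a \emph{sum} over refinement chains rather than a single \emph{product} over all refinements of $(j,r-j)$. Indeed, the naive strategy --- fully decompose $\mathcal{L}$ via its own instance of the theorem, and then, inside each left-clique, fully decompose $\mathcal{R}$ --- only gives the bound $\prod_{P}m_P$ taken over \emph{all} collectable patterns $P$ whose ordered partition refines $(j,r-j)$, which is genuinely weaker than $\sum_{\text{chains}}\prod_i M(\lambda^{(i)})$ once the $m_P$ are large (for instance, with all $m_P=m$ and $r=4$ it gives roughly $m^{18}$ for $\mathcal{A}_2$ in place of the correct $m^{14}$). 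Avoiding this forces one to refine a single block per recursive step, which requires pinning down the right induction invariant --- some notion of a ``partially aligned'' matching together with the constraints it imposes on its cliques --- and, at each step, controlling precisely the number of pieces produced by the Mirsky partition that makes the chosen block refinable, so that only the $m_P$'s along the current chain (and exactly one factor of $2$ per split) are charged; this accounting is where the real work lies.
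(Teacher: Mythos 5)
Your outer reductions are fine and broadly parallel to the paper's setup: the first Mirsky/Helly step (chains in the ``entirely to the left'' order are exactly cliques of the unique pattern in $\PP((r))$, and an antichain has pairwise intersecting hulls, hence a common split point), together with the partition by $j=|e^-|$, is a legitimate way to pass from the trivial partition $(r)$ to the two-part partitions $(j,r-j)$, and the target bound for each $\mathcal{A}_j$ is precisely the paper's intermediate statement (Theorem~\ref{thm-induction-upper-bound} with $\lambda=(j,r-j)$, up to a factor $2$). But the core of the argument --- bounding a $\lambda$-partite family when $\lambda$ has $s\ge 2$ parts by the sum over refinement chains --- is exactly the part you leave unproven, and your own sketch for it does not work as stated. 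If you try to iterate Mirsky inside a single block $\ell$ with $\lambda_\ell\ge 2$, the number of antichains you get equals the length of the longest chain in the order ``the $\ell$-th blocks are separated,'' and such a chain is \emph{not} a $P$-clique for any single pattern $P$: it only constrains block $\ell$, so its length cannot be charged to any one $m_P$, and bounding it requires a recursive bound for a hybrid configuration (interval-wise in block $\ell$, merely $\lambda$-partite elsewhere) whose closure with the claimed exponents and with only one factor of $2$ per split is precisely the ``accounting'' you acknowledge you have not carried out. As written, the proposal is an outline whose central lemma is missing, not a proof.

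The paper closes this gap differently, and it is worth seeing how: instead of a Mirsky decomposition (which needs a chain bound), it proves a dichotomy (Proposition~\ref{prop-two-options}). Fix $\lambda$ with $s<r$ parts, and let $M_{\lambda'}$ be the \emph{inductive} bound for each one-step refinement $\lambda'$, $M=\sum_{\lambda'}M_{\lambda'}$. Assuming no $\lambda'$-partite subfamily exceeds $M_{\lambda'}$, one shows that for each fixed edge $e$ and block $i$, at most $\sum_j M_{\mu^{(j)}}$ other edges have their block-$i$ hull covering the point $\min(e\cap(x_{i-1},x_i))$ --- because the edges with a prescribed split $j$ at that point form a $\mu^{(j)}$-partite subfamily, so the covering bound comes from the induction hypothesis itself rather than from any clique bound. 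A degree-counting (Caro--Wei) argument on the auxiliary graph of overlapping block-hulls then yields an interval-wise $\lambda$-partite subfamily of size at least $|\HH|/(2M)$, to which the multidimensional Erd\H{o}s--Szekeres bound (Lemma~\ref{obs-interval-wise}) applies; this is where the single factor of $2$ per refinement step and the sum over chains $\lambda^{(s)}\succ\dots\succ\lambda^{(r)}$ come from. Some such mechanism --- letting the inductive bound for refined partitions, not the $m_P$'s, control the multiplicity at a split point --- is what your inner induction would need to supply.
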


Note that Theorem~\ref{thm-paramteters-upper-bound} directly implies Theorem~\ref{thm-main} by setting $m_P=m$ for all collectable $r$-patterns $P$. Indeed, for every ordered partition $\lambda=(\lambda_1,\dots,\lambda_s)$ of $r$, we have $|\PP(\lambda)|=2^{s-1}$ (for each block of $P \in \PP(\lambda)$ except the first we have two choices for the ordering of A's and B's). There are $(r-1)!$ different sequences $\lambda^{(1)} \succ \lambda^{(2)} \succ \dots \succ \lambda^{(r)}$ of ordered partitions of $r$ (since $\lambda^{(r)}=(1,\dots,1)$ is the unique ordered partition of $r$ with $r$ parts, and for any partition $\lambda^{(s)}$ with $s$ parts there are precisely $s-1$ partitions $\lambda^{(s-1)} \succ \lambda^{(s)}$ with $s-1$ parts), and for each of them we have $|\PP(\lambda^{(1)})\cup\dots\cup \PP(\lambda^{(r)})|=1+2+\dots+2^{r-1}=2^r-1$. Hence the right-hand side of (\ref{eq_main_upper}) simplifies to $2^{r-1}(r-1)!\cdot m^{2^r-1}$ if $m_P=m$ for all $P$.

Our next result shows that the bound in Theorem~\ref{thm-paramteters-upper-bound} is tight up to a constant factor depending on $r$.

\begin{theorem} \label{thm-paramteters-lower-bound}
Let $r$ be a positive integer, and consider a positive integer $m_P$ for every collectable $r$-pattern $P$. Then there exists an  ordered $r$-uniform hypergraph matching $\HH$, such that for every collectable $r$-pattern $P$, every $P$-clique contained in $\HH$ has size at most $m_P$, and such that
\begin{equation}\label{eq_main_lower}
|\HH|= \max_{\lambda^{(1)} \succ \lambda^{(2)} \succ \dots \succ \lambda^{(r)}}\ \prod_{P\in \PP(\lambda^{(1)})\cup\dots\cup \PP(\lambda^{(r)})} m_{P},
\end{equation}
where the maximum is over all sequences $\lambda^{(1)} \succ \lambda^{(2)} \succ \dots \succ \lambda^{(r)}$ of ordered partitions of $r$.
\end{theorem}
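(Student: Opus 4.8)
I would fix a sequence $\lambda^{(1)} \succ \dots \succ \lambda^{(r)}$ attaining the maximum in \eqref{eq_main_lower}, write $\mathcal Q := \PP(\lambda^{(1)}) \cup \dots \cup \PP(\lambda^{(r)})$, and construct an ordered $r$-uniform matching $\HH$ with \emph{exactly} $\prod_{P \in \mathcal Q} m_P$ edges having the following stronger property: the relative position of any two edges of $\HH$ lies in $\mathcal Q$, and for every $P \in \mathcal Q$ the largest $P$-clique in $\HH$ has size at most $m_P$. This implies the theorem, since for a collectable pattern $P \notin \mathcal Q$ no two edges of $\HH$ are in position $P$, so its largest $P$-clique has size $1 \le m_P$.

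\textbf{Set-up for the induction.} It is convenient to identify a chain $\lambda^{(1)} \succ \dots \succ \lambda^{(r)}$ with the ordering of the $r-1$ cut positions $\{1,\dots,r-1\}$ that records when each one is introduced along the chain. I would prove, by induction on $r$, that \emph{for every such chain and every assignment of positive integers to its $2^r-1$ patterns there is an $r$-uniform ordered matching of size exactly the product of those integers in which every pair of edges is in one of the chain's patterns and every $P$-clique has size at most the integer assigned to $P$}. The base case $r=2$ is the familiar Erd\H{o}s--Szekeres-type construction: the unique chain has patterns $\rm AABB,\rm ABBA,\rm ABAB$; indexing edges by $[m_{\rm AABB}]\times[m_{\rm ABBA}]\times[m_{\rm ABAB}]$ and drawing edge $(i,j,k)$ as an interval so that edges with distinct first coordinate are disjoint-in-order ($\rm AABB$), edges agreeing only in the first coordinate are nested ($\rm ABBA$), and edges agreeing in the first two coordinates are crossing ($\rm ABAB$), gives exactly the matching required, since any $\rm AABB$-clique has distinct first coordinates, any $\rm ABBA$-clique has a common first and distinct second coordinates, and any $\rm ABAB$-clique has common first two and distinct third coordinates.

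\textbf{Inductive step.} For $r\ge 3$, let $j$ be the cut position introduced \emph{last} along the chain; then $\lambda^{(r-1)}$ consists of the singletons $\{1\},\dots,\{j-1\},\{j+2\},\dots,\{r\}$ together with the part $\{j,j+1\}$. Contracting the vertices $j$ and $j+1$ turns the chain into a chain on $r-1$ vertices whose patterns correspond to the patterns of the original chain of level at most $r-1$ via ``re-inserting vertex $j$ without splitting it''; writing $\widehat{P'}$ for the pattern so obtained from a pattern $P'$ of the contracted chain, this correspondence is a bijection except that each of the $2^{r-2}$ finest patterns $P'$ on $r-1$ vertices also lifts to the two patterns $P'_\times,P'_\cap\in\PP((1,\dots,1))$ (on $r$ vertices) in which the two re-inserted vertices lie crossing or nested, while $\widehat{P'}\in\PP(\lambda^{(r-1)})$. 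Assigning to the contracted chain the integer $m_{\widehat{P'}}$ on each non-finest pattern $P'$ and $m_{\widehat{P'}}\cdot m_{P'_\times}\cdot m_{P'_\cap}$ on each finest pattern $P'$, the induction yields an $(r-1)$-uniform matching $\mathcal G$ of size exactly $\prod_{P\in\mathcal Q}m_P$ with the required properties. I would then build $\HH$ from $\mathcal G$ by \emph{splitting the $j$-th vertex}: replace the $j$-th vertex of every edge $g$ of $\mathcal G$ by a pair of vertices lying in a tiny interval $J_g$ around its former position. If $J_g,J_h$ are disjoint the two new $r$-edges are in the un-split lift of the $\mathcal G$-position of $g$ and $h$; if $J_g,J_h$ cross or nest they are in the corresponding split pattern. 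For pairs $g,h$ whose $\mathcal G$-position is not finest I would keep $J_g,J_h$ disjoint (making the intervals small compared with the spacing of $j$-th coordinates at that scale), while inside each bottom-level block of the recursive structure of $\mathcal G$ --- a bounded set of edges that are pairwise in finest $\mathcal G$-position --- I would place the intervals $J_g$ according to the $r=2$ construction with the parameters dictated by the finest patterns present; this fits because at most $m_{\widehat{P'}}m_{P'_\times}m_{P'_\cap}$ edges of such a block lie pairwise in any single finest position $P'$. Then $|\HH|=|\mathcal G|=\prod_{P\in\mathcal Q}m_P$, every pair of edges of $\HH$ is in a chain pattern, and any $P$-clique of $\HH$ either descends to a $P'$-clique of $\mathcal G$ (if $P$ has level $\le r-2$) or is a clique of pairwise disjoint/crossing/nested intervals inside one bottom-level block (if $P\in\PP(\lambda^{(r-1)})\cup\PP((1,\dots,1))$), so its size is controlled by induction or by the $r=2$ case.

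\textbf{The hard part.} The main obstacle is making the vertex-splitting globally coherent: one has to choose all the intervals $J_g$ simultaneously so that non-finest pairs stay disjoint, each bottom-level block of $\mathcal G$ independently realizes its own copy of the $r=2$ construction with the correct parameters, and no relative position outside $\mathcal Q$ is ever produced (spurious patterns are exactly what must be avoided when a coarse pair of edges gets a ``split'' interval pattern). I expect this forces the induction to be run with a stronger hypothesis --- recording, besides the clique bounds, the laminar ``cell'' decomposition of the matching and quantitative separations between coordinates at consecutive levels --- and the bulk of the work will be checking that this enriched invariant is maintained, that the contraction/expansion correspondence between patterns is exactly as described (so that, in particular, the $2^r-1$ patterns of $\mathcal Q$ distribute among $\mathcal G$'s patterns as claimed), and that the edge counts telescope to $\prod_{P\in\mathcal Q}m_P$.
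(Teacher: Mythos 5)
Your opening moves match the paper's: reduce to the stronger statement for one fixed maximizing chain, induct on $r$, locate the position $j$ where $\lambda^{(r-1)}$ has its unique part of size $2$, contract $j$ and $j+1$, and relate patterns of the chain to patterns of the contracted chain by doubling/undoubling the $j$-th $\rm A$ and $\rm B$; your pattern bookkeeping (each finest $(r-1)$-pattern $P'$ corresponding to $\widehat{P'}\in\PP(\lambda^{(r-1)})$ and to the two finest lifts $P'_\times,P'_\cap$) is correct. However, the step you yourself label ``the hard part'' is a genuine gap, not a technicality, and it is exactly where your route diverges from a working argument. Two concrete problems with the vertex-splitting step: (i) \emph{cross-contamination of lifts.} After splitting, a pair of edges whose $\mathcal{G}$-pattern is a finest $P'$ need not land in one of the three patterns $\widehat{P'},P'_\times,P'_\cap$: e.g.\ for $r=3$, $j=2$, a pair in $\mathcal{G}$-position $\rm ABBA$ whose intervals end up crossing with the outer edge's interval first, or nested the other way, forms $\rm ABABAB$ or $\rm ABABBA$, i.e.\ the split lifts of $\rm ABAB$. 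Consequently a clique in a finest $r$-pattern can mix pairs coming from different $\mathcal{G}$-patterns, and your justification (``at most $m_{\widehat{P'}}m_{P'_\times}m_{P'_\cap}$ edges of a block lie pairwise in any single finest position $P'$'') does not bound such cliques; you would have to design the interval placement so these configurations never occur, which you do not do. (ii) \emph{Insufficient induction hypothesis.} Your inductive statement records only clique bounds, but the splitting needs much more: that the ``pairwise finest'' relation in $\mathcal{G}$ decomposes into well-separated cells (it is not even transitive for a general matching satisfying the clique bounds), that inside each cell the finest cliques carry a product structure allowing an $r=2$-type placement realizing three \emph{prescribed} bounds simultaneously, and quantitative coordinate separations so that non-finest pairs' intervals stay disjoint and local (otherwise non-collectable patterns such as $\rm AABABB$ appear). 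You acknowledge that an ``enriched invariant'' is required but never formulate or verify it, so the inductive step is not established.

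For comparison, the paper sidesteps the splitting problem entirely: it runs your contraction/doubling induction only over the levels $\lambda^{(1)},\dots,\lambda^{(r-1)}$, obtaining a matching $\HH^*$ of size $\prod_{P\in \PP(\lambda^{(1)})\cup\dots\cup\PP(\lambda^{(r-1)})} m_P$ whose pairwise patterns all lie in those levels, and then adjoins the finest level by the blow-up $\HH^*[\HH_r]$, where $\HH_r$ is an $r$-partite matching built from the equality case of the multidimensional Erd\H{o}s--Szekeres theorem with $L_P(\HH_r)\le m_P$ on $\PP(\lambda^{(r)})$ and $L_P(\HH_r)=1$ elsewhere. The submultiplicativity $L_P(\HH^*[\HH_r])\le L_P(\HH^*)L_P(\HH_r)$, together with the disjointness of the pattern families across levels, immediately yields all clique bounds and the exact edge count, with no global coherence issue. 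If you want to repair your argument, the cleanest fix is to keep the finest-level parameters out of the induction and replace the vertex-splitting by this blow-up.
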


Note that the right-hand side of \eqref{eq_main_lower} differs from the right-hand side of \eqref{eq_main_upper} by a factor of at most $2^{r-1}(r-1)!$. Indeed, there are exactly $(r-1)!$ different sequences $\lambda^{(1)} \succ \lambda^{(2)} \succ \dots \succ \lambda^{(r)}$, and \eqref{eq_main_lower} describes the largest of the $(r-1)!$ summands appearing on the right-hand side of \eqref{eq_main_upper}.

Thus, Theorems~\ref{thm-paramteters-upper-bound} and~\ref{thm-paramteters-lower-bound} determine, up to a factor of at most $2^{r-1}(r-1)!$, the maximum size of $|\HH|$ if for every collectable $r$-pattern every $P$-clique in $\HH$ has at most some given size $m_P$.

After discussing some preliminaries in Section~\ref{sect-preliminaries}, we will prove Theorem~\ref{thm-paramteters-upper-bound} (which implies Theorem~\ref{thm-main}) in Section~\ref{sect-upper-bound}, and Theorem~\ref{thm-paramteters-lower-bound} in Section~\ref{sect-lower-bound}.

\section{Preliminaries}
\label{sect-preliminaries}

The Erd\H{o}s--Szekeres Theorem~\cite{ES} states that any sequence of $n^2+1$ distinct integers contains a monotone subsequence of length $n+1$. This is a classical result in combinatorics with a rich family of generalizations and connections, see for example~\cite{ExpES,GL,LS,MS,ST}.

We will need a multi-dimensional version of the the classical (one-dimensional) Erd\H{o}s--Szekeres Theorem. For our purposes, this multi-dimensional version is most convenient to state in the following way. For a function $\tau:\{1,\dots,s\}\to \{1,-1\}$, let us say that a sequence of points $z_1,\dots,z_m\in \mathbb{R}^s$ is \emph{$\tau$-monotone} if for every $i\in \{1,\dots,s\}$ with $\tau(i)=1$ the $i$-th coordinates of the points $z_1,\dots,z_m$ form a strictly increasing sequence and for every $i\in \{1,\dots,s\}$ with $\tau(i)=-1$ the $i$-th coordinates of the points $z_1,\dots,z_m$ form a strictly decreasing sequence. 

\begin{theorem}[Multi-dimensional Erd\H{o}s--Szekeres Theorem]\label{thm_es}
Let $s$ be a positive integer. Furthermore, consider a positive integer $m_\tau$ for every function $\tau:\{1,\dots,s\}\to \{1,-1\}$ with $\tau(1)=1$. Let $Z\su \mathbb{R}^s$ be a collection of points, whose $|Z|\cdot s$ coordinates are all distinct. For each $\tau:\{1,\dots,s\}\to \{1,-1\}$ with $\tau(1)=1$, assume that every $\tau$-monotone sequence of points in $Z$ has length at most $m_\tau$. Then the number of points in $Z$ is bounded by
\[|Z|\le \prod_{\tau} m_{\tau},\]
where the product is over all functions $\tau:\{1,\dots,s\}\to \{1,-1\}$ with $\tau(1)=1$.

Furthermore, for any choice of positive integers $m_\tau$ for all functions $\tau:\{1,\dots,s\}\to \{1,-1\}$ with $\tau(1)=1$, there exists a collection of points $Z\su \mathbb{R}^s$ of size $|Z|=\prod_\tau m_\tau$ satisfying the conditions above.
\end{theorem}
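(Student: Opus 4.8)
The plan is to prove the multi-dimensional Erdős--Szekeres theorem by induction on $s$, peeling off coordinates one at a time. For the \textbf{upper bound}, I would process the coordinates in order $1,2,\dots,s$, at each stage using the one-dimensional Erdős--Szekeres/Dilworth-type argument to assign to each point a ``type vector'' recording, for the current coordinate, whether the point sits in a long increasing or long decreasing chain. Concretely, suppose $|Z|$ is large. First apply the one-dimensional Erdős--Szekeres theorem to the first coordinates: either there is an increasing subsequence of length $> m_{\tau}$ for some $\tau$ with $\tau(1)=1$ — but we must be careful, since the one-dimensional theorem for a single coordinate only gives a monochromatic chain in \emph{that} coordinate, not in all of them. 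The cleaner route is the standard product trick: to each point $z\in Z$, associate a label $\ell(z)=(\ell_1(z),\dots,\ell_s(z))\in\{+1,-1\}^s$, where $\ell_i(z)\in\{+1\}$ records that $z$ is the top of an increasing-in-coordinate-$i$ chain of some maximal length and similarly for $-1$; more precisely, for each coordinate $i$ separately, define $a_i(z)$ to be the maximum length of a sequence in $Z$ ending at $z$ that is strictly increasing in coordinate $i$, and $b_i(z)$ the analogous decreasing quantity. The key observation is that the map $z\mapsto (a_1(z),b_1(z),\dots,a_s(z),b_s(z))$ is injective on $Z$: given two distinct points $z,z'$ with (say) $z$ before $z'$ in some fixed reference order on $Z$ — wait, $Z$ has no intrinsic order, so I instead order $Z$ arbitrarily as $z^{(1)},\dots,z^{(N)}$ and note that for any $j<k$, in coordinate $1$ either $z^{(j)}_1<z^{(k)}_1$, forcing $a_1(z^{(j)})<a_1(z^{(k)})$, or $z^{(j)}_1>z^{(k)}_1$, forcing $b_1(z^{(j)})<b_1(z^{(k)})$; either way the label differs. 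Hence $|Z|\le \prod_{i=1}^s (\max_z a_i(z))(\max_z b_i(z))$, but this is not quite the claimed bound because it doesn't couple the coordinates correctly. The issue is that a point maximizing $a_1$ and a point maximizing $a_2$ need not lie on a common $\tau$-monotone chain, so I cannot directly conclude $\max_z a_i(z)\le m_\tau$ for a single $\tau$.

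So the honest approach is genuinely inductive. For the upper bound: order $Z$ so that its first coordinates are increasing is impossible in general, so instead, \emph{by the one-dimensional Erdős--Szekeres theorem applied to the first coordinate}, partition $Z=Z^{+}\sqcup Z^{-}$ is the wrong split too. Let me restart the induction properly. Fix the last coordinate $s$. For each point $z\in Z$, let $u(z)$ be the maximum length of a strictly increasing-in-coordinate-$s$ subsequence of $Z$ ending at $z$; this defines a function $u:Z\to\{1,2,\dots\}$. Let $M_{+}=\max_z u(z)$. The fibers $u^{-1}(t)$ are each strictly decreasing in coordinate $s$ (if $z,z'$ lie in the same fiber and $z_s<z'_s$ for the one earlier in a chain, we could extend). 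Actually the clean statement: $Z$ can be covered by $M_{+}$ antichains-for-coordinate-$s$ (sets strictly decreasing in coordinate $s$) and also by $M_{-}$ sets strictly increasing in coordinate $s$, where $M_\pm$ are the longest monotone-in-coordinate-$s$ runs. I would then split $Z$ into its decreasing-in-$s$ pieces $Z_1,\dots,Z_{M_+}$; on each $Z_t$, coordinate $s$ is determined to be decreasing, so a $\tau$-monotone sequence in $Z_t$ with $\tau(s)=-1$ is exactly a sequence monotone in coordinates $1,\dots,s-1$ with the appropriate signs, and the bound on its length is $m_\tau$. Apply the inductive hypothesis (in dimension $s-1$, projecting to coordinates $1,\dots,s-1$) to each $Z_t$ to get $|Z_t|\le \prod_{\tau:\tau(1)=1,\tau(s)=-1} m_\tau$. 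Summing, $|Z|\le M_+ \cdot \prod_{\tau(s)=-1} m_\tau \le (\max\text{length of coord-}s\text{-increasing seq})\cdot\prod_{\tau(s)=-1}m_\tau$. Symmetrically split by increasing-in-$s$ pieces to bound the coordinate-$s$-increasing length by $\prod_{\tau(s)=+1}m_\tau$ (again via the inductive hypothesis applied in dimension $s-1$ on each increasing-in-$s$ piece). Combining the two gives $|Z|\le \prod_{\tau(s)=-1}m_\tau \cdot \prod_{\tau(s)=+1}m_\tau=\prod_\tau m_\tau$, as desired. The base case $s=1$ is exactly the classical Erdős--Szekeres theorem (in the sharp Dilworth form $|Z|\le$ (longest increasing)$\times$(longest decreasing)).

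For the \textbf{lower bound construction} I would build $Z$ as a product. In dimension $1$, the classical sharp example is a sequence of $m_+\cdot m_-$ numbers arranged as $m_-$ decreasing blocks, the blocks themselves arranged increasingly, each of size $m_+$ — wait, we only index $\tau$ with $\tau(1)=1$, so in dimension $1$ there is a single parameter $m_{(+)}$ and we just take $m_{(+)}$ distinct reals in increasing order. Inductively, given sharp examples in dimension $s-1$ for every choice of parameters, I construct the dimension-$s$ example by taking, for the first $s-1$ coordinates, a ``blown-up'' arrangement: split $Z$ into $N_-$ groups, within each group coordinate $s$ decreases, and the groups are ordered so that coordinate $s$ across groups increases; choose $N_- = \prod_{\tau(s)=+1}m_\tau$ as the target longest-increasing-in-$s$ length, and let each group have size $\prod_{\tau(s)=-1}m_\tau$. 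Inside each group put a scaled copy of the inductive dimension-$(s-1)$ example with parameters $(m_\tau)_{\tau(s)=-1}$ in the first $s-1$ coordinates; but I must also ensure the \emph{cross-group} behavior in coordinates $1,\dots,s-1$ is controlled, which I handle by additionally nesting the group structure so that within the first $s-1$ coordinates the groups are arranged as a dimension-$(s-1)$ example with parameters $(m_\tau)_{\tau(s)=+1}$. A single clean way to organize all of this: index the points of $Z$ by pairs $(x,y)$ where $x$ ranges over a sharp dimension-$(s-1)$ example $X$ with parameters $(m_\tau)_{\tau(s)=+1}$ and $y$ ranges over a sharp dimension-$(s-1)$ example $Y$ with parameters $(m_\tau)_{\tau(s)=-1}$; set coordinates $1,\dots,s-1$ of the point $(x,y)$ to be $x + \epsilon\cdot y$ (lexicographic: $x$ dominates, $y$ is an infinitesimal perturbation), and set coordinate $s$ to encode ``$x$ increasing, $y$ decreasing'' lexicographically. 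One then checks the coordinates can be made all-distinct by generic perturbation, and that the longest $\tau$-monotone sequence has length exactly $m_\tau$: a $\tau$-monotone sequence projects to a $\tau|_{1..s-1}$-monotone sequence in the $X$-part (bounded by the $X$-parameter) and, simultaneously, coordinate $s$ forces monotonicity that restricts the $Y$-part to a $\tau|_{1..s-1}$-monotone sequence (bounded by the $Y$-parameter), and only one of these two parameters equals $m_\tau$ while the other is $1$ for a generic realization — this bookkeeping is the point that needs care.

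The \textbf{main obstacle} is getting the induction to couple the coordinates correctly in both directions. For the upper bound, the subtlety is that the naive ``product of longest monotone runs over each coordinate separately'' overcounts; the fix is to recurse on subsets where the top coordinate's direction is \emph{fixed}, so that the inductive parameters $(m_\tau)$ restricted to $\tau(s)=\pm1$ appear correctly, and to bound the longest run in coordinate $s$ itself \emph{also} by an inductive application rather than treating it atomically. For the lower bound, the dual difficulty is ensuring the product construction does not accidentally create a long $\tau$-monotone sequence combining a long run in the $X$-factor with a long run in the $Y$-factor; this is prevented by making the $Y$-perturbation infinitesimal and tying the $s$-th coordinate to the lexicographic order, so that any $\tau$-monotone sequence is ``governed'' by exactly one of the two factors. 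Both directions reduce, after this structural setup, to routine verification plus the classical one-dimensional Erdős--Szekeres theorem as the base case.
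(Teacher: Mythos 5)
Your upper-bound induction has a genuine gap at its central step. You split $Z$ into ``decreasing-in-coordinate-$s$ pieces'' via $u(z)=$ length of the longest ``strictly increasing-in-coordinate-$s$ subsequence of $Z$ ending at $z$,'' and you then want to bound the ``longest run in coordinate $s$'' by $\prod_{\tau(s)=+1}m_\tau$. But $Z$ is an unordered point set: since the last coordinates are all distinct, \emph{every} subset of $Z$ can be listed so that coordinate $s$ increases, so ``monotone in coordinate $s$ alone'' is no restriction whatsoever. Under the literal reading, $u(z)$ is just the rank of $z_s$, the fibers $u^{-1}(t)$ are singletons, $M_+=|Z|$, and the ``symmetric'' bound $M_+\le\prod_{\tau(s)=+1}m_\tau$ is exactly the statement you are trying to prove; moreover, applying the dimension-$(s-1)$ hypothesis to such a ``piece'' is not justified, because a sequence monotone in coordinates $1,\dots,s-1$ need not be monotone in coordinate $s$, so its length is not bounded by any $m_{\sigma^\pm}$. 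The hypotheses only constrain sequences simultaneously monotone in \emph{all} coordinates, so the decomposition must couple coordinate $s$ to a reference order --- most naturally coordinate $1$, the very option you dismissed (``order $Z$ so that its first coordinates are increasing is impossible'' --- it is perfectly possible, as the first coordinates are distinct). The repair: define $z\prec z'$ iff $z_1<z'_1$ and $z_s<z'_s$. Inside a chain of $\prec$, any sequence monotone in coordinates $1,\dots,s-1$ with $\sigma(1)=1$ is automatically increasing in coordinate $s$, so induction in dimension $s-1$ (with parameters $m_{\sigma^+}$) bounds the longest chain by $\prod_{\tau(s)=+1}m_\tau$; inside an antichain such a sequence is automatically decreasing in coordinate $s$, giving $\prod_{\tau(s)=-1}m_\tau$ per antichain; Mirsky/Dilworth then yields the product bound. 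With that fix your induction becomes a valid alternative to the paper's proof, which instead extracts antichains iteratively for all $2^{s-1}$ partial orders $\prec_\tau$ at once and finishes by noting that two points with distinct coordinates are comparable in exactly one $\prec_\tau$.

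Your lower-bound plan is essentially sound and amounts to a recursive version of the paper's one-shot lexicographic construction $z(a)_j=\sum_i N^i\tau_i(j)a_i$, but it is only a sketch at the decisive point. ``Coordinate $s$ encodes $x$ increasing, $y$ decreasing lexicographically'' needs a concrete linear order on the factors (e.g.\ set the last coordinate of the point indexed by $(x,y)$ to $Nx_1-y_1$ with $N$ large and the perturbation $\epsilon$ small), and the deferred ``bookkeeping'' is the actual content: for $\tau(s)=+1$, comparing coordinate $1$ with coordinate $s$ shows no two points of a $\tau$-monotone sequence share the same $x$, so the sequence injects into a $\tau|_{\{1,\dots,s-1\}}$-monotone sequence of the $X$-factor; for $\tau(s)=-1$ the same comparison forces all points to share the same $x$, so it injects into the $Y$-factor. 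Carried out, this does produce a sharp example, so the lower-bound half is a correct (if different and unfinished) route; the upper-bound half, as written, does not go through without the coupling described above.
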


Note that if $z_1, \ldots, z_m$ is a $\tau$-monotone sequence of points in $Z\su \mathbb{R}^s$ for some function $\tau: \{1,\dots,s\}\to \{1,-1\}$, then the sequence $z_m, \ldots, z_1$ is $(-\tau)$-monotone. So the largest $\tau$-monotone and $(-\tau)$-monotone sequences in $Z$ have the same size. Therefore it suffices to only consider functions $\tau$ satisfying $\tau(1)=1$ in the statement of Theorem~\ref{thm_es}.

The classical Erd{\H o}s--Szekeres theorem states that a sequence $a_1, \ldots, a_{n}$ of distinct integers must have length $n\le mm'$ if its longest monotone increasing subsequence has length at most $m$ and its longest monotone increasing subsequence has length at most $m'$. This can be viewed as a special case of Theorem~\ref{thm_es} by taking $Z\su \mathbb{R}^2$ to consist of the points $(i,a_i)$ for $i=1, \ldots, n$. Then monotone increasing subsequences of $a_1, \ldots, a_{n}$ correspond to $\tau$-monotone sequences in $Z$ for the function $\tau:\{1, 2\} \rightarrow \{1,-1\}$ given by $\tau(1)=\tau(2)=1$, and monotone decreasing subsequences of $a_1, \ldots, a_{n}$ correspond to $\tau$-monotone sequences in $Z$ for the function $\tau:\{1, 2\} \rightarrow \{1,-1\}$ given by $\tau(1)=1$ and $\tau(2)=-1$.
%: given a sequence of distinct integers $a_1, \ldots, a_{m}$ consider $Z = \{ (i, a_i), ~ i=1, \ldots, m\} \subset \mathbb R^2$. Then for any $\tau:\{1, 2\} \rightarrow \{1,-1\}$ with $\tau(1)=1$, $\tau$-monotone sequences in $Z$ correspond to monotone increasing subsequences in $(a_1, \ldots, a_m)$ if $\tau(2)=1$ and to monotone decreasing subsequences if $\tau(2)=-1$. 

Theorem~\ref{thm_es} was essentially proven in \cite{DGR} and used to construct the ordered hypergraph matchings with cliques of bounded sizes. We include a simple (and fairly standard) proof for the reader's convenience.

\begin{proof}
Note that each function $\tau:\{1, \ldots, s\}\rightarrow \{1,-1\}$ with $\tau(1) = 1$ naturally defines a partial order $\prec_{\tau}$ on $Z$. Let $\tau_1, \ldots, \tau_{2^{s-1}}$ be the list of all such functions in an arbitrary order. Then we are given positive integers $m_{\tau_i}$ for $i=1,\dots,2^{s-1}$.

Let $Z \subseteq \mathbb R^s$ be as in the first part of the theorem statement. By the assumption, for $i=1,\dots,2^{s-1}$, the length of any chain in $(Z, \prec_{\tau_i})$ is at most $m_{\tau_i}$. 
 By Dilworth's Theorem~\cite{Dil} applied to $(Z, \prec_{\tau_1})$, there exists an $\prec_{\tau_1}$-antichain $Z_1 \subseteq Z$ of size at least $|Z| / m_{\tau_1}$. Then by Dilworth's Theorem applied to $(Z_1, \prec_{\tau_2})$, there exists an $\prec_{\tau_2}$-antichain $Z_2 \subseteq Z_1$ of size at least $|Z_1| / m_{\tau_2} \ge |Z|/(m_{\tau_1}m_{\tau_2})$. Continuing this process for all $i=1, \ldots, 2^{s-1}$, we obtain a subset
\[Z' = Z_{2^{s-1}} \subseteq \ldots \subseteq Z_1\su Z\]
of size $|Z'|\ge |Z|/\prod_{i=1}^{2^{s-1}} m_{\tau_i}$ which is an antichain with respect to $\prec_{\tau_i}$ for $i=1,\dots,2^{s-1}$. However, any two vectors in $\mathbb R^s$ with distinct coordinates are comparable with respect to exactly one of the partial orders $\prec_{\tau_1},\dots,\prec_{\tau_{2^{s-1}}}$. Thus, we must have $|Z'|\le 1$, implying $|Z|\le \prod_{i=1}^{2^{s-1}} m_{\tau_i}=\prod_\tau m_{\tau}$, as desired. 

Now we prove the second part of the theorem, i.e.\ we construct a collection of points $Z\su \mathbb{R}^s$ achieving equality in the bound above. Let $N$ be a sufficiently large integer such that $N\ge 3m_{\tau_i}$ for $i=1,\dots,2^{s-1}$. For a sequence of numbers $a = (a_1, \ldots, a_{2^{s-1}})$, where $a_i \in \{1, \ldots, m_{\tau_i}\}$ for $i=1, \ldots, 2^{s-1}$, we define a vector $z(a) = (z(a)_1, \ldots, z(a)_s) \in \mathbb R^s$ by setting
\begin{equation}\label{eq_z}
z(a)_j = \sum_{i=1}^{2^{s-1}} N^{i} \tau_i(j) a_{i}
\end{equation}
for $j=1,\dots,s$. Let $Z \subseteq \mathbb R^s$ be the collection of all points $z(a)$ for all tuples $a = (a_1, \ldots, a_{2^{s-1}})$ with $a_i \in \{1, \ldots, m_{\tau_i}\}$ for $i=1, \ldots, 2^{s-1}$. First note that by the choice of $N$, all the sums of the form (\ref{eq_z}) are pairwise distinct for all possible choices of sequences $a$ and all $j=1,\dots,s$. So points the $|Z|\cdot s$ coordinates of the points $Z$ are all distinct coordinates, and we have $|Z| = \prod_{i=1}^{2^{s-1}} m_{\tau_i}=\prod_\tau m_\tau$. It remains to show that for any $i = 1, \ldots, 2^{s-1}$, any $\prec_{\tau_i}$-chain in $Z$ has length at most $m_{\tau_i}$. 

So let us some $k \in \{1, \ldots, 2^{s-1}\}$, and suppose that for some sequences $a^{(1)}, \ldots, a^{(t)}$ as above, the points $z(a^{(1)}), \ldots, z(a^{(t)}) \in Z$ form a $\tau_k$-monotone sequence. That means that for each $j \in \{1, \ldots, s\}$ we have $\tau_k(j) z(a^{(1)})_j<\dots<\tau_k(j) z(a^{(t)})_j$. 
Pick an arbitrary pair of indices $1\le \ell < \ell' \le t$ and let $k' \in \{1, \ldots, 2^{s-1}\}$ be the maximum index such that $a^{(\ell)}_{k'} \neq a^{(\ell')}_{k'}$.

Note that by (\ref{eq_z}), for any $j\in \{1, \ldots, s\}$ we have 
$$
z(a^{(\ell)})_j - z(a^{(\ell')})_j = \sum_{i=1}^{k'} N^i \tau_i(j) (a^{(\ell)}_i - a^{(\ell')}_i).
$$
The $k'$-th term in the sum above is larger in absolute value than all the smaller terms combined, which implies that the numbers $z(a^{(\ell)})_j - z(a^{(\ell')})_j$ and $\tau_{k'}(j)(a^{(\ell)}_{k'} - a^{(\ell')}_{k'})$ have the same sign. On the other hand, we know that $\tau_k(j) z(a^{(\ell)})_j < \tau_k(j) z(a^{(\ell')})_j$. So we conclude that 
\begin{equation}\label{taus}
\tau_{k}(j) \tau_{k'}(j) a^{(\ell)}_{k'} < \tau_{k}(j) \tau_{k'}(j) a^{(\ell')}_{k'}    
\end{equation}
holds for every $j \in \{1, \ldots, s\}$. Letting $j = 1$ and using $\tau_k(1) = \tau_{k'}(1)=1$ implies that $a^{(\ell)}_{k'} < a^{(\ell')}_{k'}$. Combining this with (\ref{taus}), for $j=2,\dots,s$ we get $\tau_k(j) = \tau_{k'}(j)$, meaning that the functions $\tau_k$ and $\tau_{k'}$ coincide and we in fact have $k = k'$.

We conclude that for any $1\le \ell<\ell' \le t$, the last coordinate where the sequences $a^{(\ell)}$ and $a^{(\ell')}$ differ is always equal to $k$. In particular, all $k$-th coordinates of the sequences $a^{(1)}, \ldots, a^{(t)}$ are pairwise distinct. Since these coordinates take values in the set $\{1, \ldots, m_{\tau_k}\}$, we can conclude that $t \le m_{\tau_k}$. This shows that the longest $\tau_k$-monotone sequence in $Z$ has length at most $m_{\tau_k}$, as desired.
\end{proof}

In order to make use of Theorem~\ref{thm_es} in our context, the following notation will be helpful. Consider an ordered partition $\lambda=(\lambda_1,\dots,\lambda_s)$ of $r$. Then for any function $\tau:\{1,\dots,s\}\to \{1,-1\}$ with $\tau(1)=1$, we define a collectable $r$-pattern $P(\lambda,\tau)\in \PP(\lambda)$ as follows. The block partition of $P$ consists of $s$ blocks of sizes $2\lambda_1,\dots,2\lambda_s$ (in this order). For each $i\in\{1,\dots,s\}$ with $\tau(i)=1$, in the $i$-th block we take $\lambda_i$ times the letter $\rm A$ followed by $\lambda_i$ times the letter $\rm B$. And for each $i\in\{1,\dots,s\}$ with $\tau(i)=-1$, in the $i$-th block we take $\lambda_i$ times the letter $\rm B$ followed by $\lambda_i$ times the letter $\rm A$. This gives a valid block decomposition of an $r$-pattern, and we call the resulting $r$-pattern $P(\lambda,\tau)$. Note that by definition, we have $P(\lambda,\tau)\in \PP(\lambda)$ and in particular the $r$-pattern $P(\lambda,\tau)$ is collectable. Furthermore, every $r$-pattern $P\in \PP(\lambda)$ is of the form $P=P(\lambda,\tau)$ for some function $\tau:\{1,\dots,s\}\to \{1,-1\}$ with $\tau(1)=1$.

Another helpful piece of notation for us is to write $\conv(S)$ for the convex hull of a finite subset $S\su \mathbb{R}$ (note that this convex hull is simply the interval $[\min S,\max S]$ between the minimium and the maximum of $S$).

\section{Proof of the upper bound}
\label{sect-upper-bound}

Every ordered $r$-uniform hypergraph matching can be realized by assigning (positive) integers to the vertices and taking the total ordering on the vertex set to be the natural ordering of the integers. In order to prove Theorem~\ref{thm-paramteters-upper-bound}, we can therefore assume without loss of generality that our ordered $r$-uniform hypergraph matching $\HH$ is given in this way. In other words, we can take $\HH$ to be a finite collection of pairwise disjoint subsets $e\su \Z$, each of size $|e|=r$. This somewhat simplifies the notation in our proof.

As observed for example in \cite{AJKS}, for ordered $r$-uniform hypergraph matching $\HH$ with a certain $r$-partite structure, one can bound the size $|\HH|$ in terms of the maximum sizes of the $P$-cliques contained in $|\HH|$ for each $r$-pattern $P$, just by using the multi-dimensional Erd\H{o}s--Szekeres Theorem. The following definition describes a more general condition under which this is possible.

\begin{definition}\label{def-s-partite}
    Let $r$ be a positive integer, and consider a finite collection $\HH$ of pairwise disjoint subsets $e\su \Z$, each of size $|e|=r$. For an ordered partition $\lambda=(\lambda_1,\dots,\lambda_s)$ of $r$, we say that $\HH$ is \emph{$\lambda$-partite} if there are $x_0,x_1,\dots,x_{s}\in\mathbb{R}\setminus\Z$ with $x_0<x_1<\dots<x_{s}$ such that we have $|e\cap (x_{i-1},x_i)|=\lambda_i$ for every $e\in \HH$ and every $i=1,\dots,s$. We say that $\HH$ is \emph{interval-wise $\lambda$-partite} if $x_0<x_1<\dots<x_{s}$ can be chosen such that in addition to the previous condition, for each $i=1,\dots,s$, the intervals $\conv(e\cap (x_{i-1},x_i))$  are pairwise disjoint for all $e\in \HH$.
\end{definition}

Intuitively speaking, for an ordered partition $\lambda=(\lambda_1,\dots,\lambda_s)$ of $r$, saying that $\HH$ is $\lambda$-partite means that there are $s$ intervals $(x_0,x_1),(x_1,x_2),\dots,(x_{s-1},x_s)$ such that every $e\in \HH$ has exactly $\lambda_1$ elements in the first interval $(x_0,x_1)$, exactly $\lambda_2$ elements in the second interval $(x_1,x_2)$, and so on. For $\HH$ to be interval-wise $\lambda$-partite means that the sub-intervals of $(x_0,x_1)$ spanned by the intersections $e\cap (x_0,x_1)$ are pairwise disjoint for all $e\in \HH$, and similarly for the sub-intervals of $(x_1,x_2)$ spanned by the intersections $e\cap (x_1,x_2)$, and so on.

For example, when taking $r=3$ and $\lambda=(1,2)$, then
\[
\HH = \{ \{1,4,7\}, \{2, 5,6\}, \{3,8,9\} \}
\]
is $\lambda$-partite (taking $x_0=0.5$, $x_1 = 3.5$, and $x_2 = 9.5$), but not interval-wise $\lambda$-partite. Furthermore,
\[
\mathcal H' = \{\{1,6,7\}, \{2, 4,5\}, \{3,8,9\}\},
\]
is interval-wise $\lambda$-partite (again taking $x_0=0.5$, $x_1 = 3.5$, and $x_2 = 9.5$). See also Figure~\ref{fig:figure-examples-partite} for an illustration.

\begin{figure}
    \centering
    \begin{tikzpicture}[x=0.6cm,y=0.6cm]
    \filldraw[red] (1,3) circle (3pt);
    \filldraw[red] (4,3) circle (3pt);
    \filldraw[red] (7,3) circle (3pt);
    \filldraw[blue] (2,2) circle (3pt);
    \filldraw[blue] (5,2) circle (3pt);
    \filldraw[blue] (6,2) circle (3pt);
    \filldraw[Green] (3,1) circle (3pt);
    \filldraw[Green] (8,1) circle (3pt);
    \filldraw[Green] (9,1) circle (3pt);
    \draw[dotted] (0.5,0.5) -- (0.5,3.5);
    \draw[dotted] (3.5,0.5) -- (3.5,3.5);
    \draw[dotted] (9.5,0.5) -- (9.5,3.5);
    \end{tikzpicture}
    \hspace{3cm}
    \begin{tikzpicture}[x=0.6cm,y=0.6cm]
    \filldraw[Plum] (1,3) circle (3pt);
    \filldraw[Plum] (6,3) circle (3pt);
    \filldraw[Plum] (7,3) circle (3pt);
    \draw[Plum] (6,3)--(7,3);
    \filldraw[Orange] (2,2) circle (3pt);
    \filldraw[Orange] (4,2) circle (3pt);
    \filldraw[Orange] (5,2) circle (3pt);
    \draw[Orange] (4,2)--(5,2);
    \filldraw[Cyan] (3,1) circle (3pt);
    \filldraw[Cyan] (8,1) circle (3pt);
    \filldraw[Cyan] (9,1) circle (3pt);
    \draw[Cyan] (8,1)--(9,1);
    \draw[dotted] (0.5,0.5) -- (0.5,3.5);
    \draw[dotted] (3.5,0.5) -- (3.5,3.5);
    \draw[dotted] (9.5,0.5) -- (9.5,3.5);
    \end{tikzpicture}
    \caption{The left side shows $\HH = \{ \{1,4,7\}, \{2, 5,6\}, \{3,8,9\}\}$, which is $\lambda$-partite for $\lambda=(1,2)$. The right side shows $\mathcal H' = \{\{1,6,7\}, \{2, 4,5\}, \{3,8,9\}\}$, which is interval-wise $\lambda$-partite (again for $\lambda=(1,2)$). In both pictures, the rows correspond to the individual sets in $\HH$ and  $\mathcal H'$, respectively.}
    \label{fig:figure-examples-partite}
\end{figure}

%For example, the following hypergraph is $(1,2)$-partite but not interval-wise $(1,2)$-partite:
%\[
%\mathcal H = \{ \{1,4,9\}, \{2, 5,7\}, \{3,6,8\} \}
%\]
%and the following hypergraph is interval-wise $(1,2)$-partite:
%\[
%\mathcal H' = \{\{1,6,7\}, \{2, 4,5\}, \{3,8,9\}\},
%\]
%both with parameters $x_0=0.5$, $x_1 = 3.5$, $x_2 = 9.5$ and $r=3$.

Note that $\HH$ is always $\lambda$-partite for the ordered partition $\lambda=(r)$ of $r$ into a single part (taking $(x_0,x_1)$ to be some interval containing all elements of all $e\in \HH$). Also note that if $\HH$ is $\lambda$-partite for some ordered partition $\lambda$, then $\HH$ is automatically also $\lambda'$-partite for all $\lambda'$ with $\lambda'\succ \lambda$. Furthermore, for the ordered partition $\lambda=(1,1,\dots,1)$ of $r$ into $r$ parts, it is equivalent for $\HH$ to be $\lambda$-partite or to be interval-wise $\lambda$-partite.

Finally, if $\HH$ is interval-wise $\lambda$-partite for some ordered partition $\lambda$ of $r$, then any two distinct $e,f\in\HH$ form some $r$-pattern in $\PP(\lambda)$. Which of the $r$-patterns in $\PP(\lambda)$ is formed by $e$ and $f$ depends on the the order of the intervals $\conv(e\cap (x_{i-1},x_i))$ and $\conv(f\cap (x_{i-1},x_i))$ appearing in Definition~\ref{def-s-partite} (for each $i$, there are two possibilities which of the two intervals comes first).

If $\HH$ is interval-wise $\lambda$-partite for some ordered partition $\lambda$ of $r$, then a bound on $|\HH|$ under the assumptions in Theorem~\ref{thm-paramteters-upper-bound} follows directly from the multi-dimensional Erd\H{o}s--Szekeres Theorem (see Theorem~\ref{thm_es}). 

\begin{lemma}\label{obs-interval-wise}
Let $r$ be a positive integer, and let $\lambda=(\lambda_1,\dots,\lambda_s)$ be an ordered partition of $r$. Furthermore, consider a positive integer $m_P$ for every $r$-pattern $P\in \PP(\lambda)$. Let $\HH$ be a finite collection of pairwise disjoint subsets $e\su \Z$, each of size $|e|=r$, such that $\HH$ is interval-wise $\lambda$-partite. For every $P\in \PP(\lambda)$, assume that every $P$-clique contained in $\HH$ has size at most $m_{P}$. Then we have
\[|\HH|\le \prod_{P\in \PP(\lambda)} m_{P}.\]
\end{lemma}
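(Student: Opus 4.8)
The plan is to reduce Lemma~\ref{obs-interval-wise} directly to the multi-dimensional Erd\H{o}s--Szekeres Theorem (Theorem~\ref{thm_es}) by encoding each edge $e\in\HH$ as a point in $\mathbb{R}^s$. Since $\HH$ is interval-wise $\lambda$-partite, fix the witnessing reals $x_0<x_1<\dots<x_s$, so that for each $e\in\HH$ and each $i\in\{1,\dots,s\}$ the set $e\cap(x_{i-1},x_i)$ has size $\lambda_i$ and the intervals $\conv(e\cap(x_{i-1},x_i))$ are pairwise disjoint across all $e\in\HH$. For each $i$, order the edges of $\HH$ by where their $i$-th block sits inside $(x_{i-1},x_i)$: concretely, let $c_i(e)=\min(e\cap(x_{i-1},x_i))$ (the left endpoint of the $i$-th block of $e$), and note that because the intervals $\conv(e\cap(x_{i-1},x_i))$ are pairwise disjoint, the values $c_i(e)$ for $e\in\HH$ are pairwise distinct, and moreover $c_i(e)<c_i(f)$ holds if and only if the entire $i$-th block of $e$ lies to the left of the entire $i$-th block of $f$. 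Now define $\phi:\HH\to\mathbb{R}^s$ by $\phi(e)=(c_1(e),\dots,c_s(e))$, and set $Z=\phi(\HH)$; after a harmless generic perturbation of the $x_i$'s (or by simply choosing the integer embedding of $\HH$ suitably at the outset) we may assume the $|Z|\cdot s$ coordinates are all distinct, as required by Theorem~\ref{thm_es}.

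The key point is to match up $\tau$-monotone sequences in $Z$ with $P$-cliques in $\HH$. For a function $\tau:\{1,\dots,s\}\to\{1,-1\}$ with $\tau(1)=1$, a sequence $e^{(1)},\dots,e^{(t)}$ of distinct edges of $\HH$ has $\phi(e^{(1)}),\dots,\phi(e^{(t)})$ $\tau$-monotone exactly when, for each $i$ with $\tau(i)=1$, the $i$-th blocks of $e^{(1)},\dots,e^{(t)}$ appear in this left-to-right order inside $(x_{i-1},x_i)$, and for each $i$ with $\tau(i)=-1$ they appear in the reverse order. By the description of the $r$-patterns attainable by pairs of edges in an interval-wise $\lambda$-partite collection (each block contributes, independently, one of the two orderings of A's and B's), such a sequence is precisely a $P(\lambda,\tau)$-clique in $\HH$, where $P(\lambda,\tau)\in\PP(\lambda)$ is the pattern defined in Section~\ref{sect-preliminaries}; and conversely every $P(\lambda,\tau)$-clique in $\HH$, listed in the appropriate order, gives a $\tau$-monotone sequence in $Z$. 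Since $P\mapsto P(\lambda,\tau)$ is a bijection between functions $\tau$ with $\tau(1)=1$ and patterns in $\PP(\lambda)$, the hypothesis that every $P$-clique in $\HH$ has size at most $m_P$ translates exactly into: every $\tau$-monotone sequence in $Z$ has length at most $m_{\tau}:=m_{P(\lambda,\tau)}$. Applying the first part of Theorem~\ref{thm_es} then yields $|\HH|=|Z|\le\prod_\tau m_\tau=\prod_{P\in\PP(\lambda)}m_P$, as desired.

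The main thing to get right — and the only place any real care is needed — is the bookkeeping in the correspondence between block orderings, the sign function $\tau$, and the A/B pattern $P(\lambda,\tau)$: one has to check that "$i$-th block of $e$ entirely left of $i$-th block of $f$" corresponds to the block reading $\mathrm{A}\cdots\mathrm{A}\mathrm{B}\cdots\mathrm{B}$ when $e$ plays the role of A, and that the normalization "string starts with A" together with the restriction $\tau(1)=1$ line up (this is exactly why Theorem~\ref{thm_es} only quantifies over $\tau$ with $\tau(1)=1$, matching the fact that an $r$-pattern is required to start with A). I would also spell out the one short observation that disjointness of the intervals $\conv(e\cap(x_{i-1},x_i))$ is what makes the coordinates $c_i(e)$ not just distinct but genuinely encode the block order — without interval-wise partiteness the map $\phi$ would lose this information and the reduction would fail. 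Everything else (the perturbation to ensure distinct coordinates, the fact that $\phi$ is injective since distinct edges differ in at least their first block's position) is routine.
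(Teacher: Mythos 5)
Your proposal is correct and follows essentially the same route as the paper: encode each edge by the $s$-tuple of block minima, observe via interval-wise disjointness that $\tau$-monotone sequences of these points are exactly $P(\lambda,\tau)$-cliques, and apply Theorem~\ref{thm_es}. (One small remark: no perturbation is needed for distinctness of the $|Z|\cdot s$ coordinates --- they are integers from pairwise disjoint edges, with the coordinates of a single point lying in distinct intervals, which is exactly how the paper argues.)
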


\begin{proof} 
    As $\HH$ is interval-wise $\lambda$-partite, there exist $x_0<x_1<\dots<x_{s}$ such that for each $i=1,\dots,s$, we have $|e\cap (x_{i-1},x_i)|=\lambda_i>0$ for every $e\in \HH$ and the intervals $\conv(e\cap (x_{i-1},x_i))$ are pairwise disjoint for all $e\in \HH$. Now, for every $e\in \HH$, let us define a point $z(e)\in \mathbb{R}^s$ by taking
    \[z(e)=(\min(e\cap (x_{0},x_1)), \min(e\cap (x_{1},x_2)), \dots, \min(e\cap (x_{s-1},x_s))),\]
    and define $Z=\{ z(e)\mid e\in \HH\}\su \mathbb{R}^s$. As the sets $e\in \HH$ are pairwise disjoint, the all the coordinates of the points $z(e)$ for $e\in \HH$ are distinct.
    
    For every function $\tau:\{1,\dots,s\}\to \{1,-1\}$ with $\tau(1)=1$, let us define $m_\tau=m_{P(\lambda,\tau)}$ (recall that $P(\lambda,\tau)\in \PP(\lambda)$ was defined at the end of Section~\ref{sect-preliminaries}). We claim that then every  $\tau$-monotone sequence of points in $Z\su \mathbb{R}^s$ has length at most $m_\tau$.

    Indeed, for any $\rho:\{1,\dots,s\}\to \{1,-1\}$ with $\tau(1)=1$, consider $e^{(1)},e^{(2)},\dots,e^{(t)}\in \HH$ such that the point sequence $z(e^{(1)}),z(e^{(2)}),\dots,z(e^{(t)})$ is a $\tau$-monotone sequence in $Z \su \mathbb{R}^s$. Then for any index $i\in \{1,\dots,s\}$ with $\tau(i)=1$ we have  $\min(e^{(1)}\cap (x_{i-1},x_i))< \dots <\min(e^{(t)}\cap (x_{i-1},x_i))$, and in particular $\min(e^{(\ell)}\cap (x_{i-1},x_i))<\min(e^{(\ell')}\cap (x_{i-1},x_i))$ for any $1\le \ell<\ell'\le t$. Since furthermore the intervals $\conv(e^{(\ell)}\cap (x_{i-1},x_i))$ and $\conv(e^{(\ell')}\cap (x_{i-1},x_i))$ are disjoint, this means that all elements of $e^{(\ell)}\cap (x_{i-1},x_i)$ are smaller than all elements of $e^{(\ell')}\cap (x_{i-1},x_i)$. Similarly, for any $i\in \{1,\dots,s\}$ with $\tau(i)=-1$ and any $1\le \ell<\ell'\le t$, all elements of $e^{(\ell)}\cap (x_{i-1},x_i)$ are larger than all elements of $e^{(\ell')}\cap (x_{i-1},x_i)$. Since $x_0<x_1<\dots<x_{s}$ and $|e^{(\ell)}\cap (x_{i-1},x_i)|=|e^{(\ell')}\cap (x_{i-1},x_i)|=\lambda_i$ for all  $i=1,\dots,s$, this shows that  $e^{(\ell)}$ and $e^{(\ell')}$ form the pattern $P(\lambda,\tau)$ for all $1\le \ell<\ell'\le t$. Hence $e^{(1)},e^{(2)},\dots,e^{(t)}\in \HH$ form a $P(\lambda,\tau)$-clique and therefore $t\le m_{P(\lambda,\tau)}=m_\rho$. So every $\rho$-monotone sequence in $Z$ indeed has length at most $m_\rho$.

    Applying Theorem~\ref{thm_es} now gives
    \[|\HH|=|Z|\le \prod_{\substack{\tau:\{1,\dots,s\}\to \{1,-1\}\\ \tau(1)=1}} m_{\tau}=\prod_{\substack{\tau:\{1,\dots,s\}\to \{1,-1\}\\ \tau(1)=1}} m_{P(\lambda,\tau)}=\prod_{P\in \PP(\lambda)} m_{P},\]
    as desired.
\end{proof}

The main step in our proof of Theorem~\ref{thm-paramteters-upper-bound} is showing the following proposition, which roughly speaking states that for every large $\lambda$-partite $\HH$ we can find a large $\HH^*\su \HH$ which is interval-wise $\lambda$-partite or we can find a large $\HH'\su \HH$ which is $\lambda'$-partite for some ordered partition $\lambda'$ of $r$ with $\lambda\succ \lambda'$. In the first case, we can then apply Lemma~\ref{obs-interval-wise}, whereas in the second case we can repeat the argument with $\lambda'$ instead of $\lambda$.

\begin{proposition}\label{prop-two-options}
Let $r$ be a positive integer, and let $\lambda=(\lambda_1,\dots,\lambda_s)$ be an ordered partition of $r$ into $s<r$ parts. For every ordered partition $\lambda'$ of $r$ into $s+1$ parts with $\lambda\succ\lambda'$, consider some positive integer $M_{\lambda'}$, and let $M=\sum_{\lambda'} M_{\lambda'}$ be the sum of all these positive integers. Let $\HH$ be a finite collection of pairwise disjoint subsets $e\su \Z$, each of size $|e|=r$, such that $\HH$ is $\lambda$-partite. Then at least one of the following two statements holds:
\begin{itemize}
    \item[(a)] There is a sub-collection $\HH^*\su \HH$ of size $|\HH^*|\ge |\HH|/(2M)$ such that $\HH^*$ is interval-wise $\lambda$-partite.
    \item[(b)] For some ordered partition $\lambda'$ of $r$ into $s+1$ parts with $\lambda\succ\lambda'$, there is a sub-collection $\HH'\su \HH$ of size $|\HH'|>M_{\lambda'}$ such that $\HH'$ is $\lambda'$-partite.
\end{itemize}
\end{proposition}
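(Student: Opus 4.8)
The plan is to prove the dichotomy by assuming that (b) fails and deducing (a). Fix cut points $x_0<x_1<\dots<x_s$ in $\mathbb{R}\setminus\Z$ witnessing that $\HH$ is $\lambda$-partite. For each $i$ with $\lambda_i\ge 2$ and each $j\in\{1,\dots,\lambda_i-1\}$, write $\lambda'(i,j)$ for the ordered partition of $r$ obtained from $\lambda$ by splitting the $i$-th part into $j$ and $\lambda_i-j$; these are exactly the ordered partitions $\lambda'$ of $r$ into $s+1$ parts with $\lambda\succ\lambda'$, so $M=\sum_{i\colon\lambda_i\ge 2}M^{(i)}$ where $M^{(i)}:=\sum_{j=1}^{\lambda_i-1}M_{\lambda'(i,j)}\ge 1$. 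The first step is to observe that, assuming (b) fails, for every $i$ with $\lambda_i\ge 2$ and every $p\in(x_{i-1},x_i)\setminus\Z$ we have $|\{e\in\HH\colon p\in\conv(e\cap(x_{i-1},x_i))\}|\le M^{(i)}$. Indeed, if $p\notin\Z$ lies in $\conv(e\cap(x_{i-1},x_i))$, then (its endpoints being integers) $p$ lies strictly inside this interval, so $j_e:=|e\cap(x_{i-1},p)|\in\{1,\dots,\lambda_i-1\}$ and $|e\cap(p,x_i)|=\lambda_i-j_e$; hence for each $j$ the set $\{e\colon j_e=j\}$ is $\lambda'(i,j)$-partite (with the cut points $x_0,\dots,x_{i-1},p,x_i,\dots,x_s$), so by the failure of (b) it has size at most $M_{\lambda'(i,j)}$, and summing over $j$ gives the bound.

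Next I would encode the failure of interval-wise $\lambda$-partiteness as a family of interval graphs. For each $i$ with $\lambda_i\ge 2$, let $G_i$ be the graph on vertex set $\HH$ with $e\sim f$ iff $\conv(e\cap(x_{i-1},x_i))\cap\conv(f\cap(x_{i-1},x_i))\neq\emptyset$; this is an interval graph. The key point, using that the members of $\HH$ are pairwise disjoint and $\lambda_i\ge 2$, is that the common intersection of any clique of $G_i$ — nonempty by Helly's theorem for intervals — cannot be a single integer point (that point would have to lie in two distinct members of $\HH$, or else force $\lambda_i=1$), hence it contains a point of $(x_{i-1},x_i)\setminus\Z$. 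Combined with the first step, this yields $\omega(G_i)\le M^{(i)}$.

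Finally I would extract a large interval-wise $\lambda$-partite subcollection via a degeneracy argument on the union $G:=\bigcup_{i\colon\lambda_i\ge 2}G_i$ on vertex set $\HH$. Each $G_i$ is chordal with clique number at most $M^{(i)}$, hence $(M^{(i)}-1)$-degenerate and so has at most $(M^{(i)}-1)|\HH|$ edges; therefore $G$, and likewise every induced subgraph of $G$, has at most $\sum_i(M^{(i)}-1)|\HH|=(M-s')|\HH|$ edges, where $s'$ is the number of parts of $\lambda$ equal to at least $2$ (which is positive since $s<r$). Thus $G$ is $2(M-s')$-degenerate, so $\chi(G)\le 2(M-s')+1\le 2M-1$, and a largest color class $\HH^*$ satisfies $|\HH^*|\ge|\HH|/(2M-1)\ge|\HH|/(2M)$. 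Being independent in every $G_i$ (and disjointness being automatic in the parts with $\lambda_i=1$), $\HH^*$ is interval-wise $\lambda$-partite with the cut points $x_0,\dots,x_s$, so (a) holds.

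The step I expect to be the main obstacle is obtaining the additive loss $2M$ rather than a multiplicative $\prod_i M^{(i)}$ in case (a): handling the parts one at a time by repeatedly passing to subcollections would multiply the loss factors and overshoot. This is what forces the two refinements above — first the observation that, for pairwise disjoint edges, two of our intervals can meet only at a non-integer point, which improves the per-part clique bound from the naive $2M^{(i)}$ to $M^{(i)}$; and second, handling all parts simultaneously via the edge count for the union $G$ (turning $\prod_i M^{(i)}$ into $\sum_i M^{(i)}=M$) instead of a sequence of colorings. I would also need to be careful that a clique of $G_i$ genuinely forces a common non-integer point (Helly together with the disjointness argument), and that the pigeonhole over the split position $j$ in the first step really produces a $\lambda'(i,j)$-partite subcollection with the claimed cut points.
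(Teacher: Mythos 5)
Your proof is correct and follows essentially the same route as the paper: assuming (b) fails, pigeonholing over the split position at a non-integer point inside the $i$-th part bounds the number of part-$i$ hulls through any such point by $M^{(i)}=\sum_j M_{\lambda'(i,j)}$, and this stabbing bound is then converted into an independent set of size at least $|\HH|/(2M)$ in the ``hulls intersect'' graph. The only difference is packaging: the paper orients each edge toward a hull containing the other edge's minimum and bounds the out-degree by $M-1$ before applying Caro--Wei, while you bound clique numbers via Helly and use chordal degeneracy of the per-part interval graphs plus a greedy coloring, which gives the same arithmetic.
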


Let us illustrate the proof idea for Proposition~\ref{prop-two-options} in the special case $\lambda = (r)$. Given $\HH$, consisting of pairwise disjoint subsets $e\su \Z$ of size $|e|=r$, we consider the collection of intervals $\conv(e)$ for all $e \in \HH$. We observe that at least one one of the following two statements holds: There exists a sub-collection $\HH^*\su \HH$ of size $|\HH^*|\ge |\HH|/M$ such that the intervals $\conv(e)$ for $e\in \HH^*$ are pairwise disjoint, or there exists a point $y \in \mathbb R\setminus \Z$ covered by at least $M+1$ of the intervals $\conv(e)$ for $e\in \HH$. This is a general fact about an arbitrary collection of intervals and can be easily proven by induction. In the first alternative, the sub-collection $\HH^*\su \HH$ is interval-wise $\lambda$-partite, so (a) holds (even with a slightly better lower bound on $|\HH^*|$). In the second alternative, consider distinct $e_1, \ldots, e_{M+1}\in \HH$ whose corresponding intervals $\conv(e_i)$ contain $y$. Since $M = \sum_{j=1}^{r-1} M_{(j, r-j)}$, by the pigeonhole principle, there exists some $j \in \{1, \ldots, r-1\}$ such that there are at least $M_{(j, r-j)}+1$ different $e_i$ with $|e_i \cap (-\infty, y)| = j$. The collection of these sets $e_i$ now forms a $(j, r-j)$-partite sub-collection $\HH'\su \HH$ of size $|\HH'|>M_{(j, r-j)}$ (indeed, to see that $\HH'$ is $(j, r-j)$-partite, one can choose $x_0<\min \bigcup_{e\in \HH} e$ and $x_1=y$ and $x_2>\max \bigcup_{e\in \HH} e$, observing that then $|e_i \cap (x_0, x_1)| = j$ and $|e_i \cap (x_1, x_2)| = r-j$ for all $e_i\in \HH'$), giving (b). The proof of Proposition~\ref{prop-two-options} for general $\lambda$ is similar to this special case, but involves a bit more notation.  In particular, we need to deal with unions of multiple intervals for each $e\in \HH$ instead of simply considering $\conv(e)$, and to use a degree counting argument (with a factor $2$ loss in the bound) to get a similar conclusion in (a).

%Let us illustrate the proof idea in the special case $\lambda = (\lambda_1)$. The general case is similar although a bit more notation heavy. Given a hypergraph matching $\HH$, we consider the collection of intervals $\conv(e)$ over all edges $e \in \HH$. We observe that one of the following holds: either there exists a sub-collection of at least $|\HH|/M$ pairwise disjoint intervals $\conv(e)$, $e\in \HH$, or there exists a point $y \in \mathbb R$ covered by at least $M+1$ distinct intervals $\conv(e)$, $e\in \HH$. This is a general well-known fact about an arbitrary collection of intervals on the line and it can be easily proven by induction. In the more general setting of Proposition~\ref{prop-two-options} we need to deal with unions of multiple intervals instead and use a degree counting argument to get a similar conclusion with a factor of 2 loss. In the first alternative, the set of edges corresponding to the pairwise disjoint intervals forms an interval-wise $\lambda$-partite hypergraph leading to (a) (even with a slightly better lower bound on $|\HH^*|$). In the second alternative, consider a set of $M+1$ edges $e_1, \ldots, e_{M+1}\in \HH$ whose corresponding intervals $\conv(e_i)$ contain $y$. Since we let $M = \sum_{j=1}^{r-1} M_{(j, \lambda_1-j)}$, by pigeonhole principle, there exists some $j \in \{1, \ldots, r-1\}$ such that there are at least $M_{(j, \lambda_1-j)}+1$ edges $e_i$ with $|e_i \cap (-\infty, y)| = j$. The set of these edges forms a hypergraph $\HH'$ of size greater than $M_{(j, \lambda_1-j)}$ which is $(j, \lambda_1-j)$-partite, giving (b).

\begin{proof}[Proof of Proposition~\ref{prop-two-options}]
Suppose that $\HH$ does not have a subcollection $\HH'$ as in (b). We need to show that then (a) holds. Let $x_0<x_1<\dots<x_s$ be as in Definition~\ref{def-s-partite}. For every $e\in \HH$, we consider the intervals $e\cap (x_{i-1},x_i)$ for $i=1,\dots,s$. Our goal is to find a subcollection $\HH^*\su \HH$ of size $|\HH^*|\ge |\HH|/(2M)$ such that for each $i=1,\dots,s$, the intervals $\conv(e\cap (x_{i-1},x_i))$ are pairwise disjoint for all $e\in \HH^*$. Note that for any $i=1,\dots,s$ and any $e\in \HH$, the interval $\conv(e\cap (x_{i-1},x_i))$ is a closed sub-interval of $(x_{i-1},x_i)$. Furthermore, the endpoints of all these sub-intervals are distinct (as all $e\in \HH$ are pairwise disjoint subsets of $\Z$).

Let us consider an auxiliary graph $G$ with vertex set $\HH$, where for $e,e'\in\HH$ we draw an edge between $e$ and $e'$ if $\conv(e\cap (x_{i-1},x_i))\cap \conv(e'\cap (x_{i-1},x_i))\ne \emptyset$ for some $i\in\{1,\dots,s\}$. Our goal is then to show that the graph $G$ has an independent set of size at least $|\HH|/(2M)$.

Note that for any two closed intervals $[a,b]$ and $[c,d]$ with $[a,b]\cap [c,d]\ne \emptyset$ we have $a\in [c,d]$ or $c\in [a,b]$. Hence we can orient the edges of $G$ in such a way that for any $e,e'\in\HH$ with an edge from $e$ to $e'$ there is an index $i\in\{1,\dots,s\}$ with $\min(e\cap (x_{i-1},x_i))\in \conv(e'\cap (x_{i-1},x_i))$.

In order to show that the graph $G$ contains an independent set of size at least $|\HH|/(2M)$, it suffices to show that $G$ has average degree at most $2M-1$ (by a well-known result of Caro~\cite{Caro} and Wei~\cite{Wei}). Considering the orientation of the edges of $G$, it therefore suffices to show that for every $e\in \HH$ there are at most $M-1$ outgoing edges at $e$ in the graph $G$ (then there are at most $(M-1)|\HH|$ total edges in $G$ and hence the average degree is at most $2M-2$).

So let us now fix some $e\in \HH$. We need to show that there are at most $M-1$ different $e'\in\HH\sm\{e\}$ such that $\min(e\cap (x_{i-1},x_i))\in \conv(e'\cap (x_{i-1},x_i))$ for some index $i\in\{1,\dots,s\}$. 

For every index $i\in\{1,\dots,s\}$ with $\lambda_i=1$, we have $|e'\cap (x_{i-1},x_i)|=\lambda_i=1$ for every $e'\in \HH$, and the singleton sets $e'\cap (x_{i-1},x_i)$ are all distinct for all $e'\in \HH$ (as all $e'\in \HH$ are pairwise disjoint). Hence we cannot have $\min(e\cap (x_{i-1},x_i))\in \conv(e'\cap (x_{i-1},x_i))$ for any $e'\in \HH\sm\{e\}$ when $\lambda_i=1$.

We claim that for each $i\in\{1,\dots,s\}$ with $\lambda_i\ge 2$, there can be at most $\sum_{\lambda'} M_{\lambda'}-1$ different $e'\in\HH\sm\{e\}$ with $\min(e\cap (x_{i-1},x_i))\in \conv(e'\cap (x_{i-1},x_i))$, where the sum is over all ordered partitions $\lambda'$ of $r$ into $s+1$ parts that can be obtained from $\lambda$ by dividing the $i$-th part $\lambda_i$ into two pieces. This suffices to finish the proof, since then the total number of $e'\in\HH\sm\{e\}$ with $\min(e\cap (x_{i-1},x_i))\in \conv(e'\cap (x_{i-1},x_i))$ for some $i\in\{1,\dots,s\}$ is at most
\[\sum_{\substack{i\in\{1,\dots,s\}\\\lambda_i\ge 2}}\left(\left(\sum_{\substack{\lambda'\text{obtained from }\lambda\text{ by}\\\text{dividing }\lambda_i\text{into two parts}}}\!\!M_{\lambda'}\right)-1\right)=\sum_{\lambda'} M_{\lambda'}-|\{i\in\{1,\dots,s\}\mid \lambda_i\ge 2\}|\le \sum_{\lambda'} M_{\lambda'}-1=M-1,\]
where in the second and third term the sum is over all ordered partitions $\lambda'$ of $r$ into $s+1$ parts with $\lambda\succ\lambda'$ (note that each of these ordered partitions is obtained by dividing some part $\lambda_i$ of $\lambda$ with $\lambda_i\ge 2$ into two parts). Also note that for the inequality sign we have used that there must at least one index $i\in \{1,\dots,s\}$ with $\lambda_i\ge 2$, since $s<r$.

It remains to prove the above claim, so fix some index $i\in\{1,\dots,s\}$ with $\lambda_i\ge 2$. There are $\lambda_i-1$ different ordered partitions $\lambda'$ that can be obtained from $\lambda$ by dividing $\lambda_i$ into two parts, namely the ordered partitons $\mu^{(1)}=(\lambda_1,\dots,\lambda_{i-1},1,\lambda_i-1,\lambda_{i+1},\dots,\lambda_s), \ \mu^{(2)}=(\lambda_1,\dots,\lambda_{i-1},2,\lambda_i-2,\lambda_{i+1},\dots,\lambda_s),\ \dots,\ \linebreak \mu^{(\lambda_i-1)}=(\lambda_1,\dots,\lambda_{i-1},\lambda_i-1,1,\lambda_{i+1},\dots,\lambda_s)$. Defining $y=\min(e\cap (x_{i-1},x_i))\in \Z$, we need to show that there are at most $\sum_{j=1}^{\lambda_i-1} M_{\mu^{(j)}}-1$ different $e'\in\HH\sm\{e\}$ with $y\in \conv(e'\cap (x_{i-1},x_i))$. Note that for every such $e'\in\HH\sm\{e\}$, we also have $y+0.1\in \conv(e'\cap (x_{i-1},x_i))$, since $\conv(e'\cap (x_{i-1},x_i))$ is a closed interval with endpoints in $\Z\sm\{y\}$. We furthermore also have $y+0.1\in \conv(e\cap (x_{i-1},x_i))$ (recalling that $y=\min(e\cap (x_{i-1},x_i))$ and that $e\cap (x_{i-1},x_i)\su \Z$ is a set of size $\lambda_i\ge 2$), which in particular implies $x_{i-1}<y+0.1<x_i$. It now suffices to show that there are at most $\sum_{j=1}^{\lambda_i-1} M_{\mu^{(j)}}$ different $e'\in\HH$ with $y+0.1\in \conv(e'\cap (x_{i-1},x_i))$. For every such $e'$, we have $1\le |e'\cap (x_{i-1},y+0.1)|\le \lambda_i-1$ (recalling that $e'\su \Z$ and $|e'\cap (x_{i-1},x_i)|=\lambda_i$).

For each $j=1,\dots,\lambda_i-1$, there can be at most $M_{\mu^{(j)}}$ different $e'\in\HH$ with $|e'\cap (x_{i-1},y+0.1)|=j$. Indeed, for each such $e'\in\HH$ we have $|e'\cap (y+0.1,x_i)|=|e'\cap (x_{i-1},x_i)|-|e'\cap (x_{i-1},y+0.1)|=\lambda_i-j$. Hence, by considering $x_0,\dots,x_{i-1}, y+0.1,x_i,\dots,x_s\in \mathbb{R}\sm \Z$, we can observe that the collection $\HH'\su \HH$ of all $e'\in\HH$ with $|e'\cap (x_{i-1},y+0.1)|=j$ is $\mu^{(j)}$-partite. So by our assumption that (b) does not hold, there can be at most $M_{\mu^{(j)}}$ different such $e'\in\HH$ (recall that $\mu^{(j)}$ is an ordered partition of $r$ into $s+1$ parts with $\lambda\succ \mu^{(j)}$).

Thus, in total there can be at most $\sum_{j=1}^{\lambda_i-1} M_{\mu^{(j)}}$ different $e'\in\HH$ with $1\le |e'\cap (x_{i-1},y+0.1)|\le \lambda_i-1$, meaning that there are at most $\sum_{j=1}^{\lambda_i-1} M_{\mu^{(j)}}$ different $e'\in\HH$ with $y+0.1\in \conv(e'\cap (x_{i-1},x_i))$. This finishes the proof of the proposition.
\end{proof}

We are now ready to prove Theorem~\ref{thm-paramteters-upper-bound} using Lemma~\ref{obs-interval-wise} and Proposition~\ref{prop-two-options}. In order to do so, we inductively show the following generalization of the theorem.

\begin{theorem} \label{thm-induction-upper-bound}
Let $r$ be a positive integer, and let $\lambda=(\lambda_1,\dots,\lambda_s)$ be an ordered partition of $r$. For every collectable $r$-pattern $P$, consider a positive integer $m_P$. Let $\HH$ be a finite collection of pairwise disjoint subsets $e\su \Z$, each of size $|e|=r$, such that $\HH$ is $\lambda$-partite. Furthermore, assume that for every collectable $r$-pattern $P$, every $P$-clique contained in $\HH$ has size at most $m_P$. Then the size of $\HH$ is bounded by
\begin{equation}
    |\HH|\le 2^{r-s} \sum_{\lambda^{(s)} \succ \lambda^{(s+1)} \succ \dots \succ \lambda^{(r)}}\ \prod_{ P\in \PP(\lambda^{(s)})\cup\dots\cup \PP(\lambda^{(r)}) } m_{P},
\end{equation}
where the sum is over all sequences $\lambda^{(s)} \succ \lambda^{(s+1)} \succ \dots \succ \lambda^{(r)}$ of ordered partitions of $r$ with $\lambda^{(s)}=\lambda$ (note that then for $t=s,\dots,r$, the ordered partition $\lambda^{(t)}$ must automatically have $t$ parts).
\end{theorem}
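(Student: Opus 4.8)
The plan is to prove Theorem~\ref{thm-induction-upper-bound} by downward induction on $s$ (equivalently, induction on $r-s$), combining Proposition~\ref{prop-two-options} with Lemma~\ref{obs-interval-wise}.

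\textbf{Base case $s=r$.} Here $\lambda=(1,1,\dots,1)$, and (as noted after Definition~\ref{def-s-partite}) every $\lambda$-partite collection is automatically interval-wise $\lambda$-partite. So Lemma~\ref{obs-interval-wise} gives $|\HH|\le \prod_{P\in\PP(\lambda)} m_P$, which is exactly the claimed bound: the sum on the right-hand side has a single term (the only chain is $\lambda^{(r)}=\lambda$) and $2^{r-s}=1$.

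\textbf{Inductive step.} Assume $s<r$ and that the theorem holds for all ordered partitions of $r$ with $s+1$ parts. For every ordered partition $\lambda'$ of $r$ with $s+1$ parts and $\lambda\succ\lambda'$, define
\[
M_{\lambda'} = 2^{r-s-1} \sum_{\lambda' = \mu^{(s+1)} \succ \dots \succ \mu^{(r)}}\ \prod_{P\in \PP(\mu^{(s+1)})\cup\dots\cup \PP(\mu^{(r)})} m_P,
\]
which is a positive integer since $r-s-1\ge 0$ and the sum is nonempty. Let $M=\sum_{\lambda'}M_{\lambda'}$ and apply Proposition~\ref{prop-two-options} to the $\lambda$-partite collection $\HH$ with these parameters. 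Alternative (b) cannot occur: it would give a $\lambda'$-partite subcollection $\HH'\su\HH$ with $|\HH'|>M_{\lambda'}$, but $\HH'$ still satisfies all the clique bounds (any $P$-clique in $\HH'$ is a $P$-clique in $\HH$), so the induction hypothesis applied to $\HH'$ and $\lambda'$ forces $|\HH'|\le M_{\lambda'}$, a contradiction. Hence alternative (a) holds: there is an interval-wise $\lambda$-partite $\HH^*\su\HH$ with $|\HH^*|\ge |\HH|/(2M)$. Since $\HH^*$ inherits the clique bounds, Lemma~\ref{obs-interval-wise} gives $|\HH^*|\le \prod_{P\in\PP(\lambda)}m_P$, and therefore $|\HH|\le 2M\cdot\prod_{P\in\PP(\lambda)}m_P$.

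It remains to check that $2M\cdot\prod_{P\in\PP(\lambda)}m_P$ equals the target bound. Substituting the definition of $M$ and using that the sets $\PP(\mu)$ are pairwise disjoint over all ordered partitions $\mu$ of $r$ (so that, since $\lambda\succ\mu^{(s+1)}\succ\dots\succ\mu^{(r)}$ are pairwise distinct, the product over the union $\PP(\lambda)\cup\PP(\mu^{(s+1)})\cup\dots\cup\PP(\mu^{(r)})$ factors as the product of the individual products), one gets
\[
2M\cdot\prod_{P\in\PP(\lambda)}m_P = 2^{r-s}\sum_{\lambda'}\ \sum_{\lambda' = \mu^{(s+1)} \succ \dots \succ \mu^{(r)}}\ \prod_{P\in \PP(\lambda)\cup\PP(\mu^{(s+1)})\cup\dots\cup \PP(\mu^{(r)})} m_P.
\]
The double sum — over $(s+1)$-part refinements $\lambda'$ of $\lambda$, and then over chains continuing from $\lambda'$ — is precisely the sum over all chains $\lambda=\lambda^{(s)}\succ\lambda^{(s+1)}\succ\dots\succ\lambda^{(r)}$ (each such chain arising exactly once, with $\lambda^{(s+1)}=\lambda'$), giving exactly the right-hand side of Theorem~\ref{thm-induction-upper-bound}. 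Finally, taking $\lambda=(r)$ and $s=1$ in Theorem~\ref{thm-induction-upper-bound} recovers Theorem~\ref{thm-paramteters-upper-bound}, since every $\HH$ is $(r)$-partite and $(r)$ is the unique one-part ordered partition of $r$.

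\textbf{Main obstacle.} The conceptual content lives entirely in Proposition~\ref{prop-two-options} and Lemma~\ref{obs-interval-wise}; the only genuine care needed here is (i) choosing the induction hypothesis so that $M_{\lambda'}$ is literally the bound we want to plug into Proposition~\ref{prop-two-options}, and (ii) the bookkeeping showing that multiplying the ``interval-wise'' factor $\prod_{P\in\PP(\lambda)}m_P$ by $2M$ reassembles into a clean sum over refinement chains, which relies on the disjointness of the sets $\PP(\mu)$. I do not anticipate any difficulty beyond this accounting.
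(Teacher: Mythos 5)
Your proposal is correct and follows essentially the same route as the paper: induction on $r-s$, with the base case $s=r$ handled by Lemma~\ref{obs-interval-wise}, and the inductive step defining $M_{\lambda'}$ exactly as the paper does and applying Proposition~\ref{prop-two-options} to rule out alternative (b) via the induction hypothesis. The only cosmetic difference is that you spell out the factorization over the disjoint sets $\PP(\mu)$ in the final bookkeeping, which the paper leaves implicit.
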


Note that Theorem~\ref{thm-induction-upper-bound} directly implies Theorem~\ref{thm-paramteters-upper-bound} by taking $\lambda=(r)$ to be the ordered partition of $r$ into a single part. Also recall that Theorem~\ref{thm-paramteters-upper-bound} implies Theorem~\ref{thm-main}.

\begin{proof}[Proof of Theorem~\ref{thm-induction-upper-bound}]
    We prove the theorem by induction on $r-s$ (note that we always have $s\le r$). If $s=r$, i.e.\ if $\lambda=(1,1,\dots,1)$ is the unique ordered partition of $r$ into $r$ parts, then $\HH$ is automatically interval-wise $\lambda$-partite. Thus, by Lemma~\ref{obs-interval-wise} we have
    \[|\HH|\le \prod_{P\in \PP(\lambda)} m_{P}=2^{r-r} \sum_{\lambda^{(s)} \succ \dots \succ \lambda^{(r)}}\  \prod_{P\in \PP(\lambda^{(s)})\cup\dots\cup \PP(\lambda^{(r)})} m_{P},\]
    where the sum has only one summand corresponding to the unique length-1 sequence $\lambda^{(s)} \succ \dots \succ \lambda^{(r)}$ with $\lambda^{(s)}=\lambda^{(r)}=\lambda$.

    So let us now assume that $s<r$ and that we already proved the desired statement for all smaller values of $r-s$.  For every ordered partition $\lambda'$ of $r$ into $s+1$ parts with $\lambda\succ\lambda'$, let us define
    \[M_{\lambda'}=2^{r-s-1} \sum_{\lambda^{(s+1)} \succ \lambda^{(s+2)} \succ \dots \succ \lambda^{(r)}}\ \prod_{P\in \PP(\lambda^{(s+1)})\cup\dots\cup \PP(\lambda^{(r)})} m_{P},\]
    where the sum is over all sequences $\lambda^{(s+1)} \succ \lambda^{(s+2)} \succ \dots \succ \lambda^{(r)}$ of ordered partitions of $r$ with $\lambda^{(s+1)}=\lambda'$. As in Proposition~\ref{prop-two-options}, let $M=\sum_{\lambda'}M_{\lambda'}$ be the sum of these $M_{\lambda'}$ for all ordered partitions $\lambda'$ of $r$ into $s+1$ parts with $\lambda\succ\lambda'$. Then we have
    \[M=\sum_{\lambda'} M_{\lambda'}=2^{r-s-1} \sum_{\lambda^{(s)}\succ \lambda^{(s+1)}\succ \dots \succ \lambda^{(r)}}\ \prod_{ P\in \PP(\lambda^{(s+1)})\cup\dots\cup \PP(\lambda^{(r)}) } m_{P},\]
    where on the right-hand side the sum is over all sequences $\lambda^{(s)} \succ \lambda^{(s+1)} \succ \dots \succ \lambda^{(r)}$ of ordered partitions of $r$ with $\lambda^{(s)}=\lambda$.

    For every ordered partition $\lambda'$ of $r$ into $s+1$ parts with $\lambda\succ\lambda'$, we can apply the inductive assumption for $\lambda'$, and conclude that for every $\lambda'$-partite sub-collection $\HH'\su \HH$, we must have $|\HH'|\le M_{\lambda'}$. Thus, when applying Proposition~\ref{prop-two-options} to $\HH$ and $\lambda$, with the numbers $M_{\lambda'}$ as defined above, option (b) cannot happen. Hence option (a) must happen, and we can find a sub-collection $\HH^*\su \HH$ of size $|\HH^*|\ge |\HH|/(2M)$ such that $\HH^*$ is interval-wise $\lambda$-partite. But now by Lemma~\ref{obs-interval-wise}, we have
    \[|\HH|/(2M)\le |\HH^*|\le \prod_{P\in \PP(\lambda)} m_{P}.\]
    This gives
    \[|\HH|\le 2M\cdot \prod_{P\in \PP(\lambda)} m_{P}=2^{r-s} \sum_{\lambda^{(s)} \succ \lambda^{(s+1)} \succ \dots \succ \lambda^{(r)}}\ \prod_{ P\in \PP(\lambda^{(s)})\cup\dots\cup \PP(\lambda^{(r)}) } m_{P},\]
    as desired, where the sum on the right-hand side is again over all sequences $\lambda^{(s)} \succ \lambda^{(s+1)} \succ \dots \succ \lambda^{(r)}$ of ordered partitions of $r$ with $\lambda^{(s)}=\lambda$.
    \end{proof}

\section{Lower bound Construction}
\label{sect-lower-bound}

In this section, we prove Theorem~\ref{thm-paramteters-lower-bound}, by giving a construction of a large ordered hypergraph matching where all $P$-cliques have at most some given sizes $m_P$. The construction is an appropriate multi-parameter generalization of the argument from \cite[Section 4]{AJKS}. We will actually prove the following stronger form of Theorem~\ref{thm-paramteters-lower-bound}.

\begin{theorem} \label{thm-paramteters-lower-bound-stronger}
Let $r$ be a positive integer, and let $\lambda^{(1)} \succ \lambda^{(2)} \succ \dots \succ \lambda^{(r)}$ be a sequence of ordered partitions of $r$. For every $r$-pattern $P\in \PP(\lambda^{(1)})\cup\dots\cup \PP(\lambda^{(r)})$, consider a positive integer $m_P$. Then there exists an  ordered $r$-uniform hypergraph matching $\HH$ of size
\[|\HH|=\prod_{P\in \PP(\lambda^{(1)})\cup\dots\cup \PP(\lambda^{(r)})} m_{P},\]
such that any two edges $e,f\in \HH$ form an $r$-pattern in $\PP(\lambda^{(1)})\cup\dots\cup \PP(\lambda^{(r)})$, and for each $r$-pattern $P\in \PP(\lambda^{(1)})\cup\dots\cup \PP(\lambda^{(r)})$ every $P$-clique contained in $\HH$ has size at most $m_P$.
\end{theorem}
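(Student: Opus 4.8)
The plan is to deduce Theorem~\ref{thm-paramteters-lower-bound-stronger} from the following strengthening, proved by downward induction on $s$, mirroring the structure of Theorem~\ref{thm-induction-upper-bound}: \emph{for every $s\in\{1,\dots,r\}$, every chain $\lambda^{(s)}\succ\dots\succ\lambda^{(r)}$ of ordered partitions of $r$ (so $\lambda^{(\sigma)}$ has $\sigma$ parts), and every choice of positive integers $m_P$ for $P\in\PP(\lambda^{(s)})\cup\dots\cup\PP(\lambda^{(r)})$, there is a $\lambda^{(s)}$-partite collection $\HH$ of pairwise disjoint $r$-element subsets of $\Z$ with $|\HH|=\prod_{P\in\PP(\lambda^{(s)})\cup\dots\cup\PP(\lambda^{(r)})}m_P$, such that every two edges of $\HH$ form a pattern in $\PP(\lambda^{(s)})\cup\dots\cup\PP(\lambda^{(r)})$ and every $P$-clique in $\HH$ has size at most $m_P$.} Theorem~\ref{thm-paramteters-lower-bound-stronger} is the case $s=1$, since $\lambda^{(1)}=(r)$ and any collection of $r$-sets is $(r)$-partite. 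At each step of the induction I refine $\lambda$ by one more part and ``blow up'' the matching using the multi-dimensional Erd\H os--Szekeres Theorem (Theorem~\ref{thm_es}); the gained factor $\prod_{P\in\PP(\lambda^{(s)})}m_P=\prod_{\tau}m_{P(\lambda^{(s)},\tau)}$ exactly matches the new patterns appearing at that level.

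For the base case $s=r$ we have $\lambda^{(r)}=(1,\dots,1)$, and the statement is essentially the second half of Theorem~\ref{thm_es} applied with parameter $r$: take $Z\su\mathbb{R}^r$ with $|Z|=\prod_\tau m_{P(\lambda^{(r)},\tau)}$, all coordinates distinct, and no $\tau$-monotone sequence of length exceeding $m_{P(\lambda^{(r)},\tau)}$; fix disjoint open intervals $I_1<\dots<I_r$ with order-preserving maps $\phi_i\colon\mathbb{R}\to I_i$, and let $\HH$ consist of the $r$-sets $\{\phi_1(z_1),\dots,\phi_r(z_r)\}$ for $z\in Z$ (rescaled so that all elements are integers). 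Then $\HH$ is $(1,\dots,1)$-partite, any two edges form some pattern $P(\lambda^{(r)},\tau)$ (their relative order consists of $r$ blocks of size $2$), and a $P(\lambda^{(r)},\tau)$-clique is precisely a $\tau$-monotone subsequence of $Z$, hence has size at most $m_{P(\lambda^{(r)},\tau)}$.

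For the inductive step, let $s<r$. Applying the inductive hypothesis to $\lambda^{(s+1)}\succ\dots\succ\lambda^{(r)}$ yields a $\lambda^{(s+1)}$-partite collection $\HH_0$ with the required properties; since $\lambda^{(s)}\succ\lambda^{(s+1)}$, the partition $\lambda^{(s+1)}$ is obtained from $\lambda^{(s)}$ by splitting one part into two, so merging the corresponding two regions shows $\HH_0$ is also $\lambda^{(s)}$-partite, say with regions $J_1<\dots<J_s$. Put $K=\prod_\tau m_{P(\lambda^{(s)},\tau)}$ and use Theorem~\ref{thm_es} with parameter $s$ to get points $w_1,\dots,w_K\in\mathbb{R}^s$ with all coordinates distinct and no $\tau$-monotone sequence of length exceeding $m_{P(\lambda^{(s)},\tau)}$. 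Build $\HH$ by taking, for each $k$, an order-preserving scaled copy of $\HH_0$ in which, for every $j$, the part of the copy lying in $J_j$ is squeezed into a tiny interval $T_{k,j}\su J_j$, where the $T_{1,j},\dots,T_{K,j}$ are chosen pairwise disjoint and ordered inside $J_j$ according to the $j$-th coordinates of $w_1,\dots,w_K$. Then $\HH$ is $\lambda^{(s)}$-partite and $|\HH|=K\,|\HH_0|=\prod_{P\in\PP(\lambda^{(s)})\cup\dots\cup\PP(\lambda^{(r)})}m_P$. Two edges from the same copy form the same pattern as in $\HH_0$, hence one in $\PP(\lambda^{(s+1)})\cup\dots\cup\PP(\lambda^{(r)})$; two edges from copies $k\ne k'$ are, in each region $J_j$, completely separated by $T_{k,j}$ and $T_{k',j}$, so they form a pattern with block partition exactly $\lambda^{(s)}$, i.e.\ one in $\PP(\lambda^{(s)})$. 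As block partitions are unique, the sets $\PP(\lambda^{(s)}),\dots,\PP(\lambda^{(r)})$ are pairwise disjoint, so any $P$-clique with $P\in\PP(\lambda^{(\sigma)})$ and $\sigma\ge s+1$ lies inside a single copy and has size at most $m_P$ by induction, while a $P(\lambda^{(s)},\tau)$-clique uses at most one edge per copy; ordering its copies $k_1,\dots,k_\ell$ by their first coordinates and comparing edges pairwise shows that, for each $j$, the $j$-th coordinates of $w_{k_1},\dots,w_{k_\ell}$ are strictly increasing if $\tau(j)=1$ and strictly decreasing if $\tau(j)=-1$, so $w_{k_1},\dots,w_{k_\ell}$ is $\tau$-monotone and $\ell\le m_{P(\lambda^{(s)},\tau)}$.

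The main obstacle is conceptual rather than technical: finding the right inductive statement and realizing that the recursion should peel off one block of the ordered partition at a time, so that the newly arising patterns at level $s$ are exactly those of $\PP(\lambda^{(s)})$ and are controlled by a single application of the multi-dimensional Erd\H os--Szekeres Theorem, and that the different levels $\PP(\lambda^{(\sigma)})$ never interfere because a pattern determines its block partition uniquely. Once this picture is in place, the remaining work --- arranging the nested tiny intervals $T_{k,j}$ to realize the $K$ copies, and the sign-chasing that identifies $P(\lambda^{(s)},\tau)$-cliques with $\tau$-monotone sequences of the points $w_k$ --- is routine.
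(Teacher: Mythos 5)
Your proposal is correct, but it organizes the induction differently from the paper. The paper inducts on the uniformity $r$: it uses that $\lambda^{(r-1)}$ has a unique part equal to $2$, contracts the corresponding doubled position to pass to a chain $\mu^{(1)}\succ\dots\succ\mu^{(r-1)}$ of ordered partitions of $r-1$, re-doubles the $j$-th vertex of each edge of the inductively obtained $(r-1)$-uniform matching, and then blows up by the $r$-partite matching of Lemma~\ref{obs-construction-partite}, using the submultiplicativity $L_P(\HH[\HH_r])\le L_P(\HH)L_P(\HH_r)$ of \eqref{Lp}. You instead keep $r$ fixed and do a downward induction on the number of parts $s$, peeling off the coarsest level: you arrange $\prod_{P\in\PP(\lambda^{(s)})}m_P$ copies of the level-$(s+1)$ construction region-wise according to a point set from the second part of Theorem~\ref{thm_es} in $\mathbb{R}^s$, so that cross-copy pairs realize exactly the patterns of $\PP(\lambda^{(s)})$ (here you implicitly use the uniqueness of block partitions, which the paper records), while same-copy pairs stay in $\PP(\lambda^{(s+1)})\cup\dots\cup\PP(\lambda^{(r)})$; the disjointness of the sets $\PP(\lambda^{(\sigma)})$ then cleanly separates the clique analyses, and the identification of $P(\lambda^{(s)},\tau)$-cliques with $\tau$-monotone sequences of the chosen points gives the bounds. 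In effect your step is a generalized blow-up in which the inner matching is only $\lambda^{(s)}$-partite rather than $r$-partite, split across the $s$ regions. What your route buys is that it avoids the contraction/doubling manipulation of patterns entirely and makes the construction a precise dual of the upper-bound induction in Theorem~\ref{thm-induction-upper-bound}; what the paper's route buys is that it reuses the standard blow-up operation and the one-line inequality \eqref{Lp}, with the multi-dimensional Erd\H{o}s--Szekeres construction invoked only once per uniformity level. Both arguments ultimately rest on the same extremal point configuration from Theorem~\ref{thm_es}, and your remaining bookkeeping (integer realization, nesting of the tiny intervals $T_{k,j}$) is indeed routine.
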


Note that Theorem~\ref{thm-paramteters-lower-bound-stronger} immediately implies Theorem~\ref{thm-paramteters-lower-bound} by taking $\lambda^{(1)} \succ \lambda^{(2)} \succ \ldots \succ \lambda^{(r)}$ to be a sequence of ordered partitions of $r$ achieving the maximum in (\ref{eq_main_lower}).

Following \cite{AJKS}, for an ordered $r$-uniform hypergraph matching $\HH$ and an $r$-pattern $P$, let $L_P(\HH)$ denote the size of the largest $P$-clique contained in $\HH$. Then the conditions on $\HH$ in Theorem~\ref{thm-paramteters-lower-bound-stronger} can be rephrased as $L_P(\HH)\le m_P$ for all $r$-patterns $P\in \PP(\lambda^{(1)})\cup\dots\cup \PP(\lambda^{(r)})$ and $L_P(\HH)=1$ for all other $r$-patterns $P\not\in \PP(\lambda^{(1)})\cup\dots\cup \PP(\lambda^{(r)})$.

An ordered $r$-uniform hypergraph matching $\HH$ is \emph{$r$-partite}, if in the underlying ordering of the vertex set is such that all the first vertices of all the edges come before all the second vertices of all the edges, which come before all the third vertices of all the edges, and so on. More formally, we can say that $\HH$ is $r$-partite, if as an ordered $r$-uniform hypergraph matching $\HH$ is isomorphic to a finite collection of pairwise disjoint subsets $e\su \Z$, each of size $|e|=r$, which is $\lambda^{(r)}$-partite according to Definition~\ref{def-s-partite} for the unique ordered partition $\lambda^{(r)}=(1,\dots,1)$ of $r$ into $r$ parts.

\begin{figure}
    \centering
    \begin{tikzpicture}[x=0.6cm,y=0.6cm]
    \filldraw[black] (1,3) circle (3pt);
    \filldraw[black] (5,3) circle (3pt);
    \filldraw[black] (9,3) circle (3pt);
    \filldraw[black!60] (2,2) circle (3pt);
    \filldraw[black!60] (6,2) circle (3pt);
    \filldraw[black!60] (7,2) circle (3pt);
    \filldraw[gray!30] (3,1) circle (3pt);
    \filldraw[gray!30] (4,1) circle (3pt);
    \filldraw[gray!30] (8,1) circle (3pt);
    \draw[dotted] (0.5,0.5) -- (0.5,3.5);
    \draw[dotted] (3.5,0.5) -- (3.5,3.5);
    \draw[dotted] (6.5,0.5) -- (6.5,3.5);
    \draw[dotted] (9.5,0.5) -- (9.5,3.5);
    \end{tikzpicture}
    \caption{The hypergraph matching $\HH_3 = \{ \{1,5,9\}, \{2,6,7\},\{3, 4,8\} \}$ is $r$-partite for $r=3$. Again, the rows in the picture correspond to the edges in $\HH_3$.}
    \label{fig:figure-example-r-partite}
\end{figure}

For example, when taking $r=3$, then the hypergraph matching
\[
\HH_3 = \{ \{1,5,9\}, \{2,6,7\} ,\{3, 4,8\}\}
\]
with the natural vertex ordering is $r$-partite (see Figure~\ref{fig:figure-example-r-partite}).

For our construction proving Theorem~\ref{thm-paramteters-lower-bound-stronger}, we will use a notion of a {\em blow-up} introduced in \cite{DGR22} and later also used in \cite{AJKS}. This notion is defined as follows, see also Figure~\ref{fig:blow-up} for an illustration.

\begin{definition}\label{def-blow-up}
    Let $\HH$ and $\HH_r$ be ordered $r$-uniform hypergraph matchings such that $\HH_r$ is $r$-partite. Then we define the blow-up $\HH[\HH_r]$ to be the ordered $r$-uniform hypergraph matching of size $|\HH[\HH_r]|=|\HH|\cdot |\HH_r|$ defined as follows. For the vertex set $\HH[\HH_r]$, starting with $\HH$, let us replace every vertex of $\HH$ with $|\HH_r|$ copies of itself (right after each other as a contiguous interval in the ordering of the vertex set). Then, for every edge $e\in \HH$, let us take a matching of $|\HH_r|$ edges between these copies of the vertices of $e$, in the shape prescribed by $\HH_r$.

    More formally, this can be described when modelling $\HH$ and $\HH_r$ by pairwise disjoint size-$r$ subsets of $\Z$. As $\HH_r$ is $r$-partite, there exists a sequence $x_0 < x_1 < \ldots < x_r$ of real numbers such that $|e\cap (x_{i-1},x_i)|=1$ for all $i=1,\dots,r$ and all $e\in \HH_r$. Take a large integer $N$ such that $N>x_i-x_{i-1}$ for $i=1,\dots,r$, and define the blow-up $\HH[\HH_r]$ to be the collection of size-$r$ subsets of $\Z$ of the form
    \[\{ N x_1 + y_1, N x_2 + y_2, \ldots, N x_r + y_r \}\]
    for all edges $\{x_1,\dots,x_r\}\in \HH$ with $x_1<\dots<x_r$ and all edges $\{y_1,\dots,y_r\}\in \HH_r$ with $y_1<\dots<y_r$.
\end{definition}

\begin{figure}
    \centering
    \begin{tikzpicture}[x=0.6cm,y=0.6cm]
    \filldraw[red] (1,9) circle (3pt);
    \filldraw[red] (11,9) circle (3pt);
    \filldraw[red] (21,9) circle (3pt);
    \filldraw[red!60] (2,8) circle (3pt);
    \filldraw[red!60] (12,8) circle (3pt);
    \filldraw[red!60] (19,8) circle (3pt);
    \filldraw[red!30] (3,7) circle (3pt);
    \filldraw[red!30] (10,7) circle (3pt);
    \filldraw[red!30] (20,7) circle (3pt);
    \filldraw[blue] (4,6) circle (3pt);
    \filldraw[blue] (14,6) circle (3pt);
    \filldraw[blue] (18,6) circle (3pt);
    \filldraw[blue!60] (5,5) circle (3pt);
    \filldraw[blue!60] (15,5) circle (3pt);
    \filldraw[blue!60] (16,5) circle (3pt);
    \filldraw[blue!30] (6,4) circle (3pt);
    \filldraw[blue!30] (13,4) circle (3pt);
    \filldraw[blue!30] (17,4) circle (3pt);
    \filldraw[Green] (7,3) circle (3pt);
    \filldraw[Green] (23,3) circle (3pt);
    \filldraw[Green] (27,3) circle (3pt);
    \filldraw[Green!60] (8,2) circle (3pt);
    \filldraw[Green!60] (24,2) circle (3pt);
    \filldraw[Green!60] (25,2) circle (3pt);
    \filldraw[Green!30] (9,1) circle (3pt);
    \filldraw[Green!30] (22,1) circle (3pt);
    \filldraw[Green!30] (26,1) circle (3pt);
    \end{tikzpicture}
    \caption{Example of a blow-up $\HH[\HH_3]$ with $\HH = \{ \{1,4,7\}, \{2, 5,6\}, \{3,8,9\} \}$ as on the left side of Figure~\ref{fig:figure-examples-partite}, and $\HH_3 = \{ \{1,5,9\}, \{2,6,7\},\{3, 4,8\} \}$ as in Figure~\ref{fig:figure-example-r-partite}. A copy of $\HH_3$ is placed inside each edge of $\HH$.}
    \label{fig:blow-up}
\end{figure}

%\begin{figure}
%    \centering
%    \includegraphics[width=0.7\linewidth]{hm-test1.png}
%    \caption{Example of a blow-up $\HH[\HH_3]$ with $\HH = \{ \{1, 3, 6\}, \{2, 4, 5\} \}$, and $\HH_3 = \{ \{ 1, 5,  9\}, \{3, 4, 8\}, \{ 2, 6, 7\} \}$. A copy of $\HH_3$ is placed inside each edge of $\HH$ where each edge of $\HH$ is represented as a union of three boxes.}
 %   \label{fig:blow-up}
%\end{figure}

 Note that up to an isomorphism of ordered $r$-uniform hypergraph matchings, the second description in the definition above does not depend on the choice of $N$ or the choices made when modelling $\HH$ and $\HH_r$ as collections of subsets of $\Z$.

For ordered $r$-uniform hypergraph matchings $\HH$ and $\HH_r$ as in Definition~\ref{def-blow-up}, we have
\begin{equation}\label{Lp}
L_P(\HH[\HH_r]) \le L_P(\HH) L_P(\HH_r)
\end{equation}
for every $r$-pattern $P$. Indeed, for any $P$-clique in $\HH[\HH_r]$, the corresponding edges in $\HH$ must also form a $P$-clique (indeed, any two edges in $\HH[\HH_r]$ coming from different edges in $\HH$ form the same $r$-pattern as the corresponding two edges in $\HH$). Furthermore, for every edge $e\in \HH$, the size of the largest $P$-clique among the edges in $\HH[\HH_r]$ corresponding to $e$ is bounded by $L_P(\HH_r)$ (indeed, such a $P$-clique in $\HH[\HH_r]$ corresponds to a $P$-clique in $\HH_r$).

The other ingredient in our proof Theorem~\ref{thm-paramteters-lower-bound-stronger} is the following statement, giving a construction of an $r$-partite ordered $r$-uniform hypergraph matching $\HH$ with prescribed bounds for the clique sizes $L_P(\HH)$. This statement is a direct consequence of the second part of Theorem`\ref{thm_es}.

\begin{lemma}\label{obs-construction-partite}
    Let $r$ be a positive integer, and let $\lambda^{(r)}=(1,\dots,1)$ be the unique ordered partition of $r$ into $r$ parts. For every $r$-pattern $P\in \PP(\lambda^{(r)})$, consider a positive integer $m_P$. Then there exists an $r$-partite ordered $r$-uniform hypergraph matching $\HH$ of size
\[|\HH|=\prod_{P\in \PP(\lambda^{(r)})} m_{P},\]
such that $L_P(\HH)\le m_P$ for all $r$-patterns $P\in \PP(\lambda^{(r)})$ and $L_P(\HH)=1$ for all other $r$-patterns $P\not\in \PP(\lambda^{(r)})$.
\end{lemma}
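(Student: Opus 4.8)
The plan is to deduce Lemma~\ref{obs-construction-partite} directly from the second part of Theorem~\ref{thm_es}, essentially by matching up the combinatorial data of an $r$-partite matching with that of a point configuration in $\mathbb{R}^r$. The key observation is that for the ordered partition $\lambda^{(r)}=(1,\dots,1)$ into $r$ singleton parts, an $r$-partite $r$-uniform matching is governed entirely by $r$ real coordinates per edge (the positions of the $i$-th vertices of all edges, for $i=1,\dots,r$), and the pattern formed by two edges $e,f$ is determined solely by, for each coordinate $i$, whether the $i$-th vertex of $e$ comes before or after the $i$-th vertex of $f$. Moreover, the collectable patterns in $\PP(\lambda^{(r)})$ are precisely the patterns $P((1,\dots,1),\tau)$ for functions $\tau\colon\{1,\dots,r\}\to\{1,-1\}$ with $\tau(1)=1$, as recorded at the end of Section~\ref{sect-preliminaries}.

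Concretely, I would set $s=r$ in Theorem~\ref{thm_es}, and for every $\tau\colon\{1,\dots,r\}\to\{1,-1\}$ with $\tau(1)=1$ put $m_\tau=m_{P((1,\dots,1),\tau)}$. Theorem~\ref{thm_es} then supplies a point set $Z\su\mathbb{R}^r$ with all $|Z|\cdot r$ coordinates distinct, with $|Z|=\prod_\tau m_\tau=\prod_{P\in\PP(\lambda^{(r)})}m_P$, and such that every $\tau$-monotone sequence in $Z$ has length at most $m_\tau$. From $Z$ I build $\HH$: first choose $x_0<x_1<\dots<x_r$ in $\mathbb{R}\sm\Z$, one interval per coordinate; then affinely rescale and round the $i$-th coordinates of the points of $Z$ to land as distinct integers inside $(x_{i-1},x_i)$ (this is possible since the $i$-th coordinates of the points of $Z$ are pairwise distinct, and the rescaling/rounding can be made order-preserving on the $i$-th coordinates). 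For each point $z\in Z$ this yields a size-$r$ set $e_z=\{a_1(z),\dots,a_r(z)\}$ with $a_i(z)\in(x_{i-1},x_i)$, and the sets $e_z$ are pairwise disjoint because the intervals $(x_{i-1},x_i)$ are disjoint across $i$ and within each interval the chosen integers are distinct across points of $Z$. By construction $\HH=\{e_z:z\in Z\}$ is $\lambda^{(r)}$-partite, hence $r$-partite, and $|\HH|=|Z|=\prod_{P\in\PP(\lambda^{(r)})}m_P$.

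It remains to check the clique bounds. Since $\HH$ is (interval-wise) $\lambda^{(r)}$-partite, any two distinct edges $e_z,e_{z'}$ form a pattern in $\PP(\lambda^{(r)})$; which one is determined, coordinate by coordinate, by the order of $a_i(z)$ versus $a_i(z')$, which matches the order of the $i$-th coordinates of $z$ and $z'$ (order-preserving rounding). Thus a $P$-clique in $\HH$ for $P=P((1,\dots,1),\tau)$ corresponds exactly to a sequence of points of $Z$ that is $\tau$-monotone (or $(-\tau)$-monotone, which has the same maximum length by the remark after Theorem~\ref{thm_es}); so $L_P(\HH)\le m_\tau=m_P$. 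For any $r$-pattern $P\notin\PP(\lambda^{(r)})$, no two edges of the $\lambda^{(r)}$-partite matching $\HH$ can form $P$ at all, so $L_P(\HH)=1$ trivially (a single edge is always a $P$-clique). This gives all the asserted bounds.

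The routine but mildly fiddly part is the passage from the real points of $Z$ to pairwise disjoint integer sets while preserving the relevant order relations and keeping everything inside the prescribed intervals $(x_{i-1},x_i)$; there is no genuine obstacle here, just bookkeeping, and one could alternatively skip it by observing that $r$-partite ordered matchings are naturally modeled by such point configurations up to isomorphism. The only conceptual point to get right is the precise dictionary between $\tau$-monotone sequences and $P((1,\dots,1),\tau)$-cliques, together with the symmetry $\tau\leftrightarrow-\tau$ that lets us restrict to $\tau(1)=1$.
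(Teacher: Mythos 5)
Your proposal is correct and follows essentially the same route as the paper: invoke the construction in the second part of Theorem~\ref{thm_es} with $m_\tau=m_{P(\lambda^{(r)},\tau)}$, place the $i$-th coordinates of the points of $Z$ into the $i$-th block to form an $r$-partite matching (the paper shifts by multiples of a large $N$ instead of rounding into prescribed intervals, a purely cosmetic difference), and translate $P(\lambda^{(r)},\tau)$-cliques into $\tau$-monotone sequences, with the $r$-partite structure forcing $L_P(\HH)=1$ for $P\notin\PP(\lambda^{(r)})$.
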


We note that for any $r$-partite ordered $r$-uniform hypergraph matching $\HH$ we automatically have $L_{P}(\HH)=1$ for any $P\not\in \PP(\lambda^{(r)})$. Indeed, for any two edges $e,f\in \HH$, the first vertices of $e$ and $f$ are both before the second vertices of $e$ and $f$, which are both before the third vertices of $e$ and $f$ and so on. Hence the $r$-pattern formed by $e$ and $f$ consists of $r$ blocks that each have one $\rm A$ and one $\rm B$, so the $r$-pattern has a block partition into $r$ parts and therefore belongs to $\PP(\lambda^{(r)})$.

\begin{proof}[Proof of Lemma~\ref{obs-construction-partite}]
    Recall from the discussion at the end of Section~\ref{sect-preliminaries} that any $r$-pattern $P\in \PP(\lambda^{(r)})$ is of the form $P=P(\lambda^{(r)},\tau)$ for a unique function $\tau:\{1,\dots,r\}\to \{1,-1\}$ with $\tau(1)=1$. For every such function $\tau$, let us define $m_\tau=m_{P(\lambda^{(r)},\tau)}$. By the second part of Theorem~\ref{thm_es}, there exists a collection of points $Z\su \mathbb{R}^r$ of size $|Z|=\prod_{\tau} m_{\tau}=\prod_{P\in \PP(\lambda^{(r)})} m_{P}$ such that the $|Z|\cdot r$ coordinates of all the points in $Z$ are all distinct, and such that for every function $\tau:\{1,\dots,r\}\to \{1,-1\}$ with $\tau(1)=1$, every $\tau$-monotone sequence in $Z$ has length at most $m_\tau=m_{P(\lambda^{(r)},\tau)}$.
    
    Without loss of generality, we may assume that all points in $Z$ have positive coordinates (otherwise, we can translate the set $Z$ accordingly). Let $N>0$ be a number larger than all coordinates of all points in $Z$. Now, for every point $z=(z_1,\dots,z_r)\in Z\su \mathbb{R}^r$, let us define $e(z)=\{N+ z_1, 2N + z_2, \ldots, r N+z_r \}\su \mathbb{R}$, and note that the sets $e(z)$ for $z\in Z$ are pairwise disjoint. Taking $\HH=\{e(z)\mid z\in Z\}$ to be the collection of the sets $e(z)$ for all $z\in Z$, we obtain an ordered $r$-uniform hypergraph matching $\HH$ by considering the natural ordering on $\mathbb{R}$ on the vertex set. Note that $\HH$ has size $|\HH|=|Z|=\prod_{P\in \PP(\lambda^{(r)})} m_{P}$. Furthermore, $\HH$ is $r$-partite (by the choice of $N$) and so in particular we have $L_{P}(\HH)=1$ for any $P\not\in \PP(\lambda^{(r)})$. It remains to check $L_{P}(\HH)\le m_P$ for every $P\in \PP(\lambda^{(r)})$.

    So let $P\in \PP(\lambda^{(r)})$ and let $\tau:\{1,\dots,r\}\to \{1,-1\}$ with $\tau(1)=1$ be such that $P=P(\lambda^{(r)},\tau)$, then we have $m_\tau=m_P$. Let $z^{(1)},\dots,z^{(t)}\in Z$ be such that the edges $e(z^{(1)}),\dots,e(z^{(t)})$ form $P$-clique in $\HH$. Then for any $1\le \ell<\ell'\le t$, for any $i\in \{1,\dots,r\}$ with $\tau(i)=1$, the $i$-th vertex $iN+z^{(\ell)}_i$ of $e(z^{(\ell)})$ comes before the $i$-th vertex $iN+z^{(\ell')}_i$ of $e(z^{(\ell')})$ and so we have $z^{(\ell)}_i<z^{(\ell')}_i$. Similarly for any $i\in \{1,\dots,r\}$ with $\tau(i)=1$, the $i$-th vertex of $e(z^{(\ell)})$ comes after the $i$-th vertex of $e(z^{(\ell')})$ and so we have $z^{(\ell)}_i>z^{(\ell')}_i$. Thus, for every $i\in \{1,\dots,r\}$ with $\tau(i)=1$, the $i$-th coordinates of the points $z^{(1)},\dots,z^{(t)}$ form an increasing sequence 
    $z^{(1)}_1<\dots<z^{(t)}_i$, and for every $i\in \{1,\dots,r\}$ with $\tau(i)=-1$, they form a decreasing sequence $z^{(1)}_1>\dots>z^{(t)}_i$. Thus, $z^{(1)},\dots,z^{(t)}$ is a $\tau$-monotone sequence of points in $Z$ and hence $t\le m_\tau\le m_P$. This shows that $L_P(\HH)\le m_P$, as desired.
\end{proof}

Finally, we are ready to prove Theorem~\ref{thm-paramteters-lower-bound-stronger} (implying Theorem~\ref{thm-paramteters-upper-bound}).

\begin{proof}[Proof of Theorem~\ref{thm-paramteters-lower-bound-stronger}]
We prove the theorem by induction on $r$. Note that the base case $r=1$ is trivial, so let us assume $r\ge 2$ and that we already proved the theorem for $r-1$.

In the sequence $\lambda^{(1)} \succ \lambda^{(2)} \succ \dots \succ \lambda^{(r)}$, each ordered partition $\lambda^{(s)}$ for $s=1,\dots,r$ must have precisely $s$ parts. In particular, the partition $\lambda^{(r-1)}$ has the form $\lambda^{(r-1)}=(1,\dots,1,2,1,\dots,1)$. Let $j\in \{1,\dots,r-1\}$ be the unique index such that $\lambda^{(r-1)}_j=2$. If we imagine $\lambda^{(1)} \succ \lambda^{(2)} \succ \dots \succ \lambda^{(r)}$ as partitions of the set $\{1,\dots,r\}$ into intervals, then the partition  $\lambda^{(r-1)}$ consists of the set $\{j,j+1\}$ and singleton sets for all the remaining elements. Hence $j$ and $j+1$ are in the same set for each of the partitions $\lambda^{(1)} \succ \lambda^{(2)} \succ \dots \succ \lambda^{(r-1)}$. If we ignore $j$ and relabel the remaining elements accordingly (i.e.\ $j+1$ is relabeled $j$, $j+2$ is relabeled $j+1$, and so on), $\lambda^{(1)} \succ \lambda^{(2)} \succ \dots \succ \lambda^{(r-1)}$ turns into a sequence of ordered partitions $\mu^{(1)} \succ \mu^{(2)} \succ \dots \succ \mu^{(r-1)}$ of $r-1$. For $s=1,\dots,r-1$ we obtain $\lambda^{(j)}$ from $\mu^{(j)}$ by doubling the element $j$ (keeping both copies in the same set of the partition), and relabeling the ground set into $\{1,\dots,r\}$ accordingly.

Now, for $s=1,\dots,r-1$, every $r$-pattern $P\in \PP(\lambda^{(s)})$ turns into an $(r-1)$-pattern $P'\in \PP(\mu^{(s)})$ when omitting the $j$-th $\rm A$ and the $j$-th $\rm B$ in $P$ (note that these occur in the same block, i.e.\ right adjacent to the $(j+1)$-th $\rm A$ and the $(j+1)$-th $\rm B$, respectively). Conversely, $P$ can be recovered from $P'\in \PP(\mu^{(s)})$ by doubling the $j$-th $\rm A$
in $P$ into $\rm{AA}$ and doubling the $j$-th $\rm B$
in $P$ into $\rm{BB}$. Thus, these operations give natural bijections between $\PP(\lambda^{(s)})$ and $\PP(\mu^{(s)})$.

For every $(r-1)$-pattern $P'\in \PP(\mu^{(1)})\cup\dots\cup \PP(\mu^{(r-1)})$, let us define $m_{P'}=m_P$, where $P\in \PP(\lambda^{(1)})\cup\dots\cup \PP(\lambda^{(r-1)})$ is the $r$-pattern obtained from $P'$ by doubling the $j$-th $\rm A$ and the $j$-th $\rm B$. Then, applying the induction hypothesis to the sequence $\mu^{(1)} \succ \mu^{(2)} \succ \dots \succ \mu^{(r-1)}$ of ordered partitions of $r-1$, we obtain an ordered $(r-1)$-uniform hypergraph matching $\HH'$ of size
\[|\HH'|=\prod_{P'\in \PP(\mu^{(1)})\cup\dots\cup \PP(\mu^{(r-1)})} m_{P'}=\prod_{P\in \PP(\lambda^{(1)})\cup\dots\cup \PP(\lambda^{(r-1)})} m_{P},\]
such that $L_{P'}(\HH')\le m_{P'}$ for all $(r-1)$-patterns $P'\in \PP(\mu^{(1)})\cup\dots\cup \PP(\mu^{(r-1)})$ and $L_{P'}(\HH')=1$ for all other $(r-1)$-patterns $P'\not\in \PP(\mu^{(1)})\cup\dots\cup \PP(\mu^{(r-1)})$.

From $\HH'$, we can now obtain an ordered $r$-uniform hypergraph matching $\HH^*$ with $|\HH^*|=|\HH'|$ by doubling the $j$-th element of every edge $e'\in \HH'$. More formally, for every edge $e'\in \HH'$, we add a new element to $e'$ right before the $j$-the element of $e'$ (in the given ordering of the underlying vertex set of $\HH'$). For the resulting edge $e\in \HH^*$, we can recover the original edge $e'\in \HH'$ by deleting the $j$-the element of $e$. If $e,f\in \HH^*$ form some $r$-pattern $P$, then the $(r-1)$-pattern formed by the corresponding edges $e',f'\in \HH'$ is the $(r-1)$-pattern $P'$ obtained from $P$ by omitting the $j$-th $\rm A$ and the $j$-th $\rm B$. Thus, by the properties of $\HH'$, we have $L_P(\HH)\le m_P$ for all $r$-patterns $P\in \PP(\lambda^{(1)})\cup\dots\cup \PP(\lambda^{(r-1)})$ and $L_P(\HH)=1$ for all other $r$-patterns $P\not\in \PP(\lambda^{(1)})\cup\dots\cup \PP(\lambda^{(r-1)})$.

Finally, recall that $\lambda^{(r)}=(1,\dots,1)$ is the unique ordered partition of $r$ into $r$ parts. Let $\HH_r$ be an $r$-partite ordered $r$-uniform hypergraph matching as in Lemma~\ref{obs-construction-partite}. Now, the blow-up $\HH=\HH^*[\HH_r]$ has size
\[|\HH|=|\HH^*|\cdot |\HH_r|=|\HH'|\cdot |\HH_r|=\prod_{P\in \PP(\lambda^{(1)})\cup\dots\cup \PP(\lambda^{(r-1)})} m_{P}\cdot \prod_{P\in \PP(\lambda^{(r)})} m_{P}=\prod_{P\in \PP(\lambda^{(1)})\cup\dots\cup \PP(\lambda^{(r)})} m_{P},\]
and by (\ref{Lp}) for every $r$-pattern $P$ we have $L_P(\HH)\le L_P(\HH^*)\cdot L_P(\HH_r)$. Hence $L_P(\HH)\le m_P\cdot 1=m_P$ for all $P\in \PP(\lambda^{(1)})\cup\dots\cup \PP(\lambda^{(r-1)})$, and $L_P(\HH)\le 1\cdot m_P=m_P$ for all $P\in \PP(\lambda^{(r)})$, and also $L_P(\HH)\le 1\cdot 1=1$ for all $P\not\in \PP(\lambda^{(1)})\cup\dots\cup \PP(\lambda^{(r)})$. Thus, we have constructed the desired ordered $r$-uniform hypergraph matching $\HH$.
\end{proof}

\bibliographystyle{amsplain}

\begin{thebibliography}{99}
%\bibitem{bergelson-johnson-moreira}
%Vitaly Bergelson, John H. Johnson Jr., and Joel Moreira.
%\newblock New polynomial and multidimensional extensions of classical partition
%  results.
%\newblock 2015, arXiv:1501.02408.

%\bibitem{cilleruelo}
%Javier Cilleruelo.
%\newblock Combinatorial problems in finite fields and {S}idon sets.
%\newblock {\em Combinatorica}, 32(5):497--511, 2012.


\bibitem{AJKS} 
Michael Anastos, Zhihan Jin, Matthew Kwan, and Benny Sudakov.
\newblock Extremal, enumerative and probabilistic results on ordered hypergraph matchings.
\newblock {\em Forum of Mathematics Sigma} 13: Paper No.~e55, 39~pp, 2025.

\bibitem{Campos} 
Marcelo Campos, Simon Griffiths, Robert Morris, and Julian Sahasrabudhe. 
\newblock An exponential improvement for diagonal Ramsey.
\newblock 2023, arXiv:2303.09521.

\bibitem{Caro} 
Yair Caro. 
\newblock New Results on the Independence Number.
\newblock Technical Report, Tel-Aviv University, 1979.

\bibitem{ExpES} 
Recep Altar Çiçeksiz, Recep Altar, Zhihan Jin, Eero Räty, and István Tomon. 
\newblock Exponential Erd\H{o}s-Szekeres theorem for matrices.
\newblock {\em Proceedings of the London Mathematical Society} 129, no. 3: Paper No. e12632, 14~pp, 2024.

\bibitem{Dil} 
Robert P. Dilworth 
\newblock A Decomposition Theorem for Partially Ordered Sets.
\newblock {\em Annals of Mathematics}  51: 161--166, 1950.

\bibitem{DGR22}
Andrzej Dudek, Jarosław Grytczuk, and Andrzej Ruci\'{n}ski. 
\newblock Ordered unavoidable sub-structures in matchings and random matchings.
\newblock {\em Electronic Journal of Combinatorics} 31, no. 2: Paper No. 2.15, 27 pp, 2024

\bibitem{DGR} 
Andrzej Dudek, Jarosław Grytczuk, and Andrzej Ruci\'{n}ski. 
\newblock Erd\H{o}s-Szekeres type Theorems for ordered uniform matchings.
\newblock {\em Journal of Combinatorial Theory, Series B} 170: 225--259, 2025.

\bibitem{ES} 
Paul Erd{\H o}s and George Szekeres. 
\newblock A combinatorial problem in geometry.
\newblock {\em Compositio Mathematica} 2: 463--470, 1935.

\bibitem{GL} Timothy W. Gowers and Jason Long.
\newblock The length of an s-increasing sequence of r-tuples. \newblock {\em Combinatorics, Probability and Computing} 30, no. 5: 686--721, 2021.

\bibitem{graham-rothschild-spencer} 
Ronald L. Graham, Bruce L. Rothschild, and Joel H. Spencer. 
\newblock {\em Ramsey theory.} Vol. 20. John Wiley \& Sons, 1991.


\bibitem{Lee} 
Choongbum Lee. 
\newblock Ramsey numbers of degenerate graphs.
\newblock {\em Annals of Mathematics} 185, no. 3: 791--829, 2017.

\bibitem{LS} 
Nathan Linial and Michael Simkin. 
\newblock Monotone subsequences in high-dimensional permutations. 
\newblock {\em Combinatorics, Probability and Computing} 27, no. 1: 69--83, 2018.

\bibitem{MV} 
Sam Mattheus and Jacques Verstraete. 
\newblock The asymptotics of $r(4,t)$.
\newblock {\em Annals of Mathematics} 199, no.~2: 919–941,  2024.

%\bibitem{Mirsky} 
%Leon Mirsky. 
%\newblock A dual of Dilworth’s decomposition theorem.
%\newblock {\em Amer. Math. Monthly} 78, no. 8: 876-877, 1971.

\bibitem{MS} 
Guy Moshkovitz and Asaf Shapira. 
\newblock Ramsey theory, integer partitions and a new proof of the Erdős–Szekeres theorem. 
\newblock {\em Advances in Mathematics} 262: 1107--1129, 2014.

\bibitem{promel} 
Hans Jürgen Prömel. 
\newblock {\em Ramsey theory for discrete structures.} Vol. 3. New York: Springer, 2013.

\bibitem{RR} 
Christian Reiher, and Vojt\v{e}ch R\"odl. 
\newblock The girth Ramsey theorem.
\newblock 2023, arXiv:2308.15589.

\bibitem{ST} 
Tibor Szab\'{o} and G\'{a}bor Tardos. 
\newblock A multidimensional generalization of the Erd\H{o}s–Szekeres lemma on monotone subsequences.
\newblock {\em Combinatorics, Probability and Computing} 10, no. 6: 557--565, 2001.

\bibitem{Wei} 
Victor K. Wei. 
\newblock A Lower Bound on the Stability Number of a Simple Graph.
\newblock {\em Technical memorandum}, TM 81-11217-9, Bell Laboratories, 1981.

\end{thebibliography}

\begin{aicauthors}
\begin{authorinfo}[pgom]
  Lisa Sauermann\\
  Institute for Applied Mathematics, University of Bonn\\
  Bonn, Germany\\
  sauermann\imageat{}iam\imagedot{}uni-bonn\imagedot{}de \\
  \url{https://www.iam.uni-bonn.de/users/sauermann/home}
\end{authorinfo}
\begin{authorinfo}[johan]
  Dmitrii Zakharov\\
  Massachusetts Institute of Technology\\
  Cambridge, Massachusetts, United States of America\\
  zakhdm\imageat{}mit\imagedot{}edu \\
  \url{https://math.mit.edu/directory/profile.html?pid=2452}
\end{authorinfo}

\end{aicauthors}

\end{document}